\documentclass[12pt]{amsart}
\usepackage{amsmath}
\usepackage{amssymb}
\usepackage[mathcal]{eucal}
\usepackage[all]{xy}
\usepackage{latexsym}
\usepackage{amstext}
\usepackage{amsfonts}
\usepackage{amsthm}
\usepackage{amsopn}
\usepackage{amsbsy}
\usepackage{layout}
\usepackage{color}
\usepackage{graphicx}
\usepackage{bm}
\usepackage{yfonts}
\usepackage{pifont}
\usepackage[scr]{rsfso}
\usepackage{tikz}
\newtheorem{thm}{Theorem}[section]
\newtheorem{lem}[thm]{Lemma}

\newtheorem{cor}[thm]{Corollary}
\newtheorem{pro}[thm]{Proposition}

\newtheorem{claim}{Claim}

\newcommand{\Int}{\mbox{{\rm Int}}}

\def\supp{{\mathrm {supp}}\,}

\makeatletter
\def\Int{\mathop{\operator@font Int}\nolimits}
\makeatother

\begin{document}

\title[Linear continuous operators with bounded supports]
{Linear continuous operators with bounded supports}

\author{Vesko Valov}
\address{Department of Computer Science and Mathematics, Nipissing University,
100 College Drive, P.O. Box 5002, North Bay, ON, P1B 8L7, Canada}
\email{veskov@nipissingu.ca}
\thanks{The author was partially supported by NSERC Grant RGPIN-2025-07173}

\keywords{$C_p(X)$-space, finite-dimensional spaces, supports of linear continuous functionals, zero-dimensional space}
\subjclass[2010]{Primary 54C35; Secondary 54F45}


\begin{abstract}
For any Tychonoff space $X$ let $D(X)$ be either the set $C(X)$ of all continuous functions on $X$ or the set $C^*(X)$ of all bounded continuous functions on $X$. When $D(X)$ is endowed with the point convergence topology, we write $D_p(X)$.  Zakrzewski \cite[Theorem 3.12]{kz} proved that if $X$ and $Y$ are $\sigma$-compact spaces and
there is a continuous linear map $T:C_p(X)\to C_p(Y)$ such that $T(C_p(X))$ is dense in $C_p(Y)$ and $|\supp(y)|\leq m$ for every $y\in Y$, then
$\dim Y\leq m\cdot\dim X+m+m!-1$. Here, $\supp(y)$ denotes the support of the linear continuous map $l_y:C_p(X)\to\mathbb R$, defined by 
$l_y(f)=T(f)(y)$. In the present paper we improve the last inequality by showing that $\dim Y\leq m\cdot\dim X$ provided $X,Y$ are Tychonoff spaces and there is a continuous linear surjection $T:D_p(X)\to D_p(Y)$ with $|\supp(y)|\leq m$ for every $y\in Y$. The methods developed to prove this result yields a generalization of \cite[Theorem 1.4]{ev}: If  $T:D_p(X)\to D_p(Y)$ is a continuous linear surjection with $X,Y$ Tychonoff spaces and $\dim X=0$, then $\dim Y=0$. 
\end{abstract}

\maketitle\markboth{}{Operators with bounded supports}




\section{Introduction}\label{intro}
For a Tychonoff space $X$
 we denote by $C(X)$ the linear space of all continuous real-valued functions on $X$.
$C^*(X)$ is the subspace of $C(X)$ consisting of the bounded functions.
Everywhere below, by $D(X)$ we denote either $C^*(X)$ or $C(X)$, and $D_p(X)$ stays for $D(X)$ endowed with the point-wise convergence topology.  
If $T:D_p(X)\to D_p(Y)$ is a continuous linear map, then there are four possible cases: $D(X)$ is either $C(X)$ or $C^*(X)$ and $D(Y)$ is either $C(Y)$ or $C^*(Y)$. We write $D_p(X)$ (resp., $D_p(Y)$) if $D(X)$ (resp., $D(Y)$) is endowed with the point-wise convergence topology.
More information about function spaces with the point-wise convergence topology can be found in 
\cite{ar1}, \cite{vanMill}, \cite{tk}.

Throughout the paper by dimension we mean the {\em covering dimension $\dim$}. 
Recall that for a Tychonoff space $X$ and an integer $n\geq 0$, $\dim X \leq n$ if every finite functionally open cover of the space $X$
 has a finite functionally open refinement of order $\leq n$, see \cite{en}.
According to that definition, we have $\dim X=\dim\beta X$, where $\beta X$ is the \v{C}ech-Stone compactification of $X$.
After the striking results of Pestov \cite{p} and Gul'ko \cite{gu} that $\dim X=\dim Y$ for any Tychonoff spaces $X$ and $Y$ 
provided $C_p(X)$ and $C_p(Y)$ are linearly homeomorphic or uniformly homeomorphic, Arhangel'skii \cite{ar} posed the problem
whether $\dim Y\leq\dim X$ if there is a continuous linear surjection from $C_p(X)$ onto $C_p(Y)$.
This question was answered negatively by Leiderman-Levin-Pestov \cite{llp} and Leiderman-Morris-Pestov \cite{lmp}: 
For every finite-dimensional metrizable compact space $Y$ there exists a continuous linear surjection $T: C_p([0,1])\to C_p(Y)$ \cite{lmp}. 

However, it turned out that the zero-dimensional case is an exception.  It was shown in \cite{llp} that if there is a linear continuous surjection 
$T: C_p(X)\to C_p(Y)$ for compact metrizable spaces $X$ and $Y$,
 then $\dim X=0$ implies that $\dim Y=0$. The last result was extended for arbitrary compact spaces by Kawamura-Leiderman \cite{KawL}
 who raised the question if their result holds for arbitrary Tychonoff spaces $X$ and $Y$.
 Recently, this difficult question was answered positively in \cite{ev}. In the present paper we show this is also true for linear continuous surjections
$T:D_p(X)\to D_p(X)$, see Theorem 1.3 below.  

Theorem 1.3 is obtained as a result of the methods developed to extend the following theorem of
Zakrzewski \cite[Theorem 3.12]{kz}: If $X$ and $Y$ are $\sigma$-compact spaces 
and there is a continuous linear map $T:C_p(X)\to C_p(Y)$ such that $T(C_p(X))$ is dense in $C_p(Y)$ and $|\supp(y)|\leq m$ for every $y\in Y$, then
$\dim Y\leq m\cdot\dim X+m+m!-1$. Here, $\supp(y)$ denotes the support of the linear continuous map $l_y:C_p(X)\to\mathbb R$, defined by 
$l_y(f)=T(f)(y)$, see the precise definition below. In the present paper we improve the last inequality.

\begin{thm}\label{theorem-main} Let $X$ and $Y$ be Tychonoff spaces and $T:D_p(X)\to D_p(Y)$ be a surjective continuous linear map such that $|\supp(y)|\leq m$ for every $y\in Y$. Then $\dim Y\leq m\cdot\dim X$.
 \end{thm}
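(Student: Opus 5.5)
We would prove Theorem~\ref{theorem-main} by covering $Y$ with closed pieces that embed into powers of $X$, and then applying a countable sum theorem. For $y\in Y$, $\supp(y)$ is the finite set of points $x\in X$ such that every neighbourhood of $x$ supports a function $f$ with $l_y(f)\neq 0$. Consequently $l_y(f)=\sum_{x\in\supp(y)}\lambda_x(y)f(x)$ with all coefficients $\lambda_x(y)\neq 0$. Surjectivity of $T$ gives $\supp(y)\neq\emptyset$ for every $y$: otherwise $T(f)(y)=0$ for all $f$, whereas the constant function $1$ lies in $T(D(X))$.

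For $k\le m$ let $Y_k=\{y:|\supp(y)|\le k\}$. The first step is to check that the set-valued map $y\mapsto\supp(y)$ is lower semicontinuous; this follows from continuity of $T$. Hence each $Y_k$ is closed. Next, for $n\in\mathbb N$ let $Y_{k,n}$ be the set of $y\in Y_k\setminus Y_{k-1}$ such that $|\lambda_x(y)|\ge 1/n$ for all $x\in\supp(y)$ and $\|l_y\|\le n$ in the appropriate sense. These sets are closed in $Y_k\setminus Y_{k-1}$, and on each of them the map $y\mapsto\supp(y)$ is continuous into the hyperspace $[X]^{k}$ (finite sets with the Vietoris topology). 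Using surjectivity we then show this map is injective and in fact an embedding onto its image. Surjectivity gives, for distinct $y,y'$ and for neighbourhoods of $y$, functions $g=T(f)$ that separate them. Writing $g(y)=\sum\lambda_x(y)f(x)$ and using the coefficient bounds, $y$ is recovered from the finitely many values $(f(x))_{x\in\supp(y)}$, uniformly on $Y_{k,n}$. We will in fact pass to a map into $X^k$ after locally ordering the support, obtained by choosing disjoint neighbourhoods of the points of the support. Since $\dim$ of a subspace of $X^k$ is not automatically controlled, we will work in $\beta X$: extend to bounded functions and $\beta Y$, where $\dim(\beta X)^k\le k\dim X$ holds for compact spaces.

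The final step combines the pieces. Each $Y_{k,n}$, or rather its closure in $\beta Y$ intersected appropriately, is shown to have $\dim\le k\cdot\dim X$ by realizing it as a closed subset of a space that maps with finite fibres of cardinality at most $k!$ onto a closed subset of $(\beta X)^k$. The ordering ambiguity is exactly what produced the $m!$ term in \cite{kz}. To avoid the loss we take $[\beta X]^k$ itself, whose dimension is at most $k\dim X$ because it is a closed-map finite-to-one image of $(\beta X)^k$ by a map that is open, so Hurewicz-type estimates for open finite-to-one maps do not raise dimension. A countable sum theorem over the closed (or $F_\sigma$) pieces $Y_{k,n}$, combined with normality obtained by working with functionally closed sets, then gives $\dim Y\le m\dim X$.

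The main obstacle is the last part: obtaining $\dim$ estimates for pieces of a Tychonoff, non-normal $Y$ from a countable cover, since the sum theorem for $\dim$ needs normality. We will try first to carry out everything in $\beta Y$. Using $D(Y)\supseteq C^*(Y)$, we extend the supports to points of $\beta Y$ via the functionals $f\mapsto\beta(Tf)(z)$ restricted to $T^{-1}(C^*(Y))$, and show that $\beta Y$ itself is covered by the countably many compact pieces, each with $\dim\le m\dim X$. The countable sum theorem in the compact space $\beta Y$ then gives $\dim Y=\dim\beta Y\le m\dim X$.
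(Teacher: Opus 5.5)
Your stratification of $Y$ by support size and coefficient bound, with the support map into the space of $k$-point sets, is the same skeleton as Proposition 2.3. However, the proposal has genuine gaps, and the most serious one is exactly the step you flag.

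\textbf{Covering $\beta Y$ fails.} You cannot in general cover all of $\beta Y$ by countably many compact pieces on which $\beta(Tf)(z)=\sum_i\lambda_i(z)\beta f(x_i)$ holds with bounded coefficients. Take $X=Y=\mathbb N$ and $T(f)(n)=nf(n)$ on $C_p(\mathbb N)$.
\begin{itemize}
\item $T$ is a continuous linear bijection with singleton supports.
\item Your pieces are $Y_{1,n}=\{1,\dots,n\}$, so their closures cover only $\mathbb N$.
\item At $z\in\beta\mathbb N\setminus\mathbb N$, the functional $f\mapsto\beta(Tf)(z)$ on $T^{-1}(C^*(Y))$ has support $\{z\}$ but is not a multiple of $f\mapsto\beta f(z)$. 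Every such $f$ has $\beta f(z)=0$, while $f(n)=1/n$ gives $\beta(Tf)(z)=1$.
\end{itemize}
This is why Proposition 2.1(d) only gives $Y\subset\bigcup_{p,k}\overline Y_{p,k}$. Moreover, the strata are differences of closed sets, and in a non-metrizable compactum these need not be $\sigma$-compact. The appeal to normality via functionally closed sets is also not substantiated: you never show the pieces are zero-sets, and the countable sum theorem for $\dim$ is a theorem about normal spaces.

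\textbf{The missing idea} is the reduction to the separable metrizable case. By Pestov's criterion (3.1), it suffices to find, for every map $h$ of $Y$ onto a second countable space, some $h_0\succ h$ with $\dim h_0(Y)\le m\dim X$. The paper builds, by a back-and-forth induction:
\begin{itemize}
\item countable admissible families $\Psi\subset D(X)$ and $\Phi=T(\Psi)\subset D(Y)$ with $\triangle\Phi\succ h$;
\item a metrizable compactification $\overline X_0$ of $(\triangle\Psi)(X)$ with $\dim\overline X_0\le\dim X$, carrying a $QS$-algebra with condition (2.3) and enough partitions of unity (Marde\v{s}i\'{c} factorization, Lemma 3.2);
\item a metrizable compactification $\overline Y_0$ of $Y_0=h_0(Y)$ in which the real-valued extensions are dense in $C_p(\overline Y_0)$ (Lemma 3.4).
\end{itemize}
Then $T$ induces a continuous linear $\varphi$ with supports of size at most $m$. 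The strata in $\overline Y_0$ are $F_\sigma$, the pieces $Y_0\cap F_n(p,k)$ are closed in the metrizable $Y_0$, and the ordinary countable sum theorem applies.

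\textbf{Two intermediate claims are also wrong as stated.}

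First, the support map is not injective. Let $X=\{a,b\}$, $Y=\{c,d\}$, $T(f)(c)=f(a)+f(b)$ and $T(f)(d)=f(a)-f(b)$. Then $T$ is a linear homeomorphism and $\supp(c)=\supp(d)=\{a,b\}$, so $y$ cannot be recovered from $\supp(y)$. What is true (Claim 3 in the proof of Proposition 2.3) is that each fibre $A$ has at most $k$ points. On $A$ all functions $T(f)$ lie in the span of $\lambda_1|A,\dots,\lambda_k|A$. Surjectivity then forces $C_p(A)$ to be at most $k$-dimensional; for points of the compactification this uses density of the extensions plus closedness of finite-dimensional subspaces. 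Hurewicz's theorem for perfect maps with zero-dimensional fibres then bounds the dimension of a piece by that of its image in $[\overline X]^k$.

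Second, your justification of $\dim[\beta X]^k\le k\dim X$ does not work. The symmetrization map $(\beta X)^k\to\exp_{\le k}(\beta X)$ is closed and finite-to-one, but in general not open for $k\ge 3$. For instance, the image of $U\times U\times V$ around $(x,x,y)$ misses the sets $\{x,y,y'\}$ with $y'\ne y$ near $y$. And closed finite-to-one maps can raise dimension, e.g.\ the $2$-to-$1$ map of the Cantor set onto $[0,1]$. The paper instead covers $[\overline X]^k$ by the closed sets $W(U_1,\dots,U_k)\cong\prod_i cl(U_i)$ and applies the countable sum theorem over a countable base (Lemma 2.2). This again uses that $\overline X$ is metrizable.
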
 
Note that in case both $X$ and $Y$ are metrizable, Theorem 1.1 was established in \cite{lv}.
\begin{cor}
Suppose $X,Y$ are normal spaces and there is a linear continuous surjection $T:D_p(X)\to D_p(Y)$. If $X$  is strongly countable-dimensional, then so is $Y$.
\end{cor}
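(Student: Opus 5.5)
The statement to prove is the Corollary: for normal spaces, a linear continuous surjection $T:D_p(X)\to D_p(Y)$ with $X$ strongly countable-dimensional forces $Y$ to be strongly countable-dimensional. Recall that a normal space is strongly countable-dimensional if it is a countable union of closed subsets, each of finite covering dimension. The strategy is to decompose $Y$ according to the size of the supports, apply Theorem \ref{theorem-main} on each piece, and then cut the pieces further using the decomposition of $X$.

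\emph{Step 1: decompose $Y$ by support size.} Write $X=\bigcup_{i}X_i$ with each $X_i$ closed and $\dim X_i<\infty$. For $y\in Y$ let $\supp(y)$ be the support of $l_y$; it is a finite subset of $\beta X$ (or of $X$, depending on the case of $D$). For each $m$ put
\[
Y_m=\{y\in Y:|\supp(y)|\le m\}.
\]
I will use the standard facts from the theory of supports, presumably developed later in the paper:
\begin{itemize}
\item[(a)] each $Y_m$ is closed in $Y$, by lower semicontinuity of $y\mapsto\supp(y)$;
\item[(b)] $Y=\bigcup_m Y_m$.
\end{itemize}
Next I need a restriction map onto a piece. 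For a closed $F\subset Y$, normality gives that every bounded continuous function on $F$ extends to $Y$ (Tietze), and so does every continuous function. Hence the restriction $D_p(Y)\to D_p(F)$ is surjective, and composing it with $T$ gives a linear continuous surjection $D_p(X)\to D_p(F)$ whose supports are unchanged.

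\emph{Step 2: apply Theorem \ref{theorem-main} piecewise.} Theorem \ref{theorem-main} applied to $Y_m$ would give $\dim Y_m\le m\cdot\dim X$, but $\dim X$ may be infinite. So I refine the pieces using the $X_i$. For a finite set $\sigma=\{i_1,\dots,i_k\}$ of indices let
\[
Y_{m,\sigma}=\{y\in Y_m:\supp(y)\subset \overline{X_{i_1}\cup\dots\cup X_{i_k}}\},
\]
with the closure taken where the supports live. Then $X_\sigma=X_{i_1}\cup\dots\cup X_{i_k}$ is closed in $X$ and has finite dimension by the countable sum theorem for normal spaces. The sets $Y_{m,\sigma}$ are closed: if $\supp(y)$ meets the open complement $U$ of $X_\sigma$, then some $f$ supported in $U$ has $T(f)(y)\ne0$, and this is an open condition in $y$. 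They also cover $Y$, since each finite support meets only finitely many $X_i$; this needs care, because the $X_i$ need not be disjoint, and it works by choosing for each support point some index.

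Now define $S:D_p(X_\sigma)\to D_p(Y_{m,\sigma})$ by $S(g)=T(\tilde g)|_{Y_{m,\sigma}}$, where $\tilde g$ is any extension of $g$ to $X$. This is well defined because the supports lie in $X_\sigma$. It is linear and surjective, since any $h$ on $Y_{m,\sigma}$ extends to $Y$ and is then $T$ of something. Theorem \ref{theorem-main} then gives $\dim Y_{m,\sigma}\le m\dim X_\sigma<\infty$, and $Y$ is the countable union of these closed finite-dimensional sets.

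\emph{Main obstacle.} The hard step is showing that $S$ is continuous in the pointwise topology on $D_p(X_\sigma)$. Pointwise continuity of $l_y$ means $l_y(f)=\sum_{x\in\supp(y)}c_x f(x)$ when $\supp(y)\subset X$. From this formula, $S(g)(y)$ depends only on the values of $g$ at finitely many points, which gives continuity. When supports lie in $\beta X\setminus X$ (the $C^*$ case), I would first argue, as in the paper's machinery, that supports in fact lie in $X$.
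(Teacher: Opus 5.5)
Your proof is correct and is essentially the paper's argument. The paper likewise sets $Y_{k,n}=\{y\in Y:\supp(y)\subset X_n,\ |\supp(y)|\le k\}$, gets closedness from lower semicontinuity, uses normality of $X$ and $Y$ to obtain a well-defined surjection $T_{k,n}:D_p(X_n)\to D_p(Y_{k,n})$, and applies Theorem~\ref{theorem-main}. Your finite unions $X_\sigma$ and your finite-formula proof that $S$ is continuous supply two points the paper leaves tacit: it implicitly takes the $X_n$ increasing, and it only asserts continuity. Your worry about the $C^*$ case is unnecessary, since Proposition~2.1(b) gives $\supp(y)\subset X$ for every $y\in Y$ in all cases.
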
 
When both $X$ and $Y$ are compact metrizable spaces, Corollary 1.2 was established in \cite[Theorem 4.2(b)]{gkm}.

The methods from the proof of Theorem 1.1 provide also the next result (it was established in \cite{ev} when $T$ is either a surjection between $C_p(X)$ and $C_p(Y)$ or between $C_p^*(X)$ and $C_p^*(Y)$).
\begin{thm}
If $X$ and $Y$ are Tychonoff spaces and  $T:D_p(X)\to D_p(Y)$ is a surjective continuous linear map such that $\dim X=0$, then $\dim Y=0$.
\end{thm}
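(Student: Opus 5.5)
We plan to follow the scheme of \cite{ev}, replacing the bounded-support hypothesis of Theorem 1.1 by a stratification of $Y$ according to support size. For every $y\in Y$ the functional $l_y(f)=T(f)(y)$ is continuous on $D_p(X)$, so $\supp(y)$ is a finite subset of $X$ (of $\beta X$ in the case $D(X)=C^*(X)$), and
\[
l_y=\sum_{x\in\supp(y)}\lambda_x(y)\,\delta_x .
\]
We set $Y_n=\{y:|\supp(y)|\le n\}$. The standard facts apply: each $Y_n$ is closed in $Y$, and the support map $y\mapsto\supp(y)$ is lower semicontinuous. Surjectivity of $T$ gives $Y=\bigcup_n Y_n$ (up to passing to the \v{C}ech--Stone setting). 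It also gives that $Y_1$ is nonempty at suitable places; more precisely, for each $y$ the coefficients cannot all cancel on a set of functions that must separate $y$ from closed sets.

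We reduce to compact spaces. Since $\dim Y=\dim\beta Y$ and $\dim X=\dim\beta X$, it suffices to show that every finite functionally open cover of $Y$ has a disjoint functionally open refinement. Given such a cover, we choose finitely many $g_1,\dots,g_k\in D(Y)$ witnessing it, and preimages $f_i\in D(X)$ with $T(f_i)=g_i$. This is where surjectivity is used, and it is the point where $C$ versus $C^*$ matters. We use $\dim X=0$ to replace each $f_i$ by a function $h_i$ taking finitely many values. We do this on a suitable compact piece, and approximately, with the error controlled using the coefficients $\lambda_x(y)$ on the sets $Y_{n,M}=\{y\in Y_n:\|l_y\|\le M\}$, which are closed. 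Then $T(h_i)$ is a function whose values on $Y_{n,M}$ lie in a finite-dimensional set determined by combinatorics of at most $n$ points with bounded weights. Its level structure gives a clopen partition of $Y_{n,M}$ refining the cover.

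The main obstacle is the gluing. We must pass from partitions of the closed pieces $Y_{n,M}$ to a global disjoint refinement of $Y$. We would handle this with a countable sum theorem for $\dim$, which is valid in normal spaces and hence in $\beta Y$ after taking closures of the $Y_{n,M}$ there. We would need to check that the closures in $\beta Y$ still carry the bounded-support/bounded-norm structure, via the extension of $T$ to $C^*(\beta X)$-type maps. This also requires that $\|l_y\|$ is finite and lower semicontinuous.

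Alternatively, we would invoke the machinery proved for Theorem 1.1 piecewise: on each $Y_{n,M}$ the argument of Theorem 1.1 gives $\dim\le n\cdot\dim X=0$. So each closed $Y_{n,M}$ is zero-dimensional, and the countable sum theorem yields $\dim Y=0$. The delicate point, which we expect to be the real work, is verifying that the restriction of $T$ to the piece is still surjective enough for the proof of Theorem 1.1 to apply. This is where the full strength of surjectivity has to be exploited, as in \cite{ev}.
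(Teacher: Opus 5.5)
There is a genuine gap.

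\textbf{The main line does not work.} You approximate the preimages $f_i$ by finitely-valued $h_i$ and then read off a clopen partition of $Y_{n,M}$ from the ``level structure'' of $T(h_i)$. But $T(h_i)(y)=\sum_{x\in\supp(y)}\lambda_x(y)h_i(x)$, and the weights $\lambda_x(y)$ vary continuously in $y$. So the possible values form an interval, not a finite set, and the level sets of $T(h_i)$ are closed but in general not open.

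For example, let $X=Y$ be the Cantor set, $x_0\in X$, and $\lambda\in C(X)$ with $\lambda(x_0)=0$ and $\lambda(X)=[0,1]$. Then $T(f)=f+f(x_0)\lambda$ is a continuous linear surjection with $|\supp(y)|\le 2$. Yet $T(1)=1+\lambda$ has no clopen level structure at all.

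What your plan lacks is the mechanism that actually produces zero-dimensionality, which is where surjectivity does the real work:
\begin{itemize}
\item Take a compact stratum on which $|\supp(y)|=k$ and $a(y)\le p$. There the map $y\mapsto\supp(y)\in[\overline X]^k$ is continuous.
\item On each fibre $A(y)$ one gets a linear map $\mathbb R^k\cong C_p(\supp(y))\to C_p(A(y))$ with dense, hence closed, hence full image. So $|A(y)|\le k$.
\item This gives a perfect finite-to-one map into $[\overline X]^k$, and $\dim[\overline X]^k\le k\cdot\dim\overline X=0$ by Lemma 2.2. That forces dimension $0$ (Proposition 2.3, Claims 2 and 3).
\end{itemize}

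\textbf{The alternative route has the right shape, but the two issues you postpone are exactly what fails for a merely Tychonoff $Y$.} Your suggested remedies do not fix them.
\begin{itemize}
\item A closed set $Y_n\subset Y$ need not be $C$- or $C^*$-embedded. So $f\mapsto T(f)|Y_n$ need not be onto $D_p(Y_n)$, and Theorem 1.1 cannot be applied to it.
\item The countable sum theorem is a theorem about closed covers of normal spaces, and $Y$ need not be normal.
\item Passing to closures in $\beta Y$ does not help. $\mathrm{cl}_{\beta Y}Y_n$ is not $\beta Y_n$ unless $Y_n$ is $C^*$-embedded. These closures also need not cover $\beta Y$: already for $T(f)(n)=\sum_{j\le n}f(j)$ on $C_p(\mathbb N)$ their union is $\mathbb N$.
\end{itemize}

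\textbf{How the paper avoids this.} It never stratifies $Y$ itself. Using Pestov's criterion $(3.1)$ and the construction from the proof of Theorem 1.1, for each $h\in\mathcal F_Y$ it produces:
\begin{itemize}
\item a separable metrizable $Y_0=h_0(Y)$ with $h_0\succ h$;
\item metric compactifications $\overline X_0$, with $\dim\overline X_0=0$, and $\overline Y_0$;
\item a continuous semi-linear surjection of countable $QS$-algebras satisfying Proposition 2.1.
\end{itemize}
Only then does it set $Y_0(m)=\{y\in Y_0:|\mathrm{supp}_\varphi(y)|\le m\}$. These sets are closed in the metrizable $Y_0$. The restricted map is automatically onto the restricted $QS$-algebra. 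The density hypothesis passes to the closure of $Y_0(m)$ in $\overline Y_0$ by the Tietze theorem. Proposition 2.3 then gives $\dim Y_0(m)=0$, and the countable sum theorem in $Y_0$ yields $\dim Y_0=0$, hence $\dim Y=0$ by $(3.1)$.
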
 
A few words about the structure of the paper. In Section 2 we provide the main properties of the supports of linear continuous maps when the domains are $QS$-algebras. Section 3 contains some technical lemmas, which are used in the proof of Theorem 1.1. Section 4 contains all proofs. Specially, the proof of Theorem 1.1 is reduced to the case when both $X$ and $Y$ are separable metric spaces.  

\section{$QS$-algebras and supports of linear functionals}
Let $\mathbb Q$ be the set of rational numbers. A subspace $E(X)\subset D(X)$ is called a {\em $QS$-algebra} \cite{gu} if it satisfies the following conditions: (i) If $f,g\in E(X)$ and $\lambda\in\mathbb Q$, then all functions $f+g$, $f\cdot g$ and $\lambda f$ belong to $E(X)$; (ii) For every $x\in X$ and its neighborhood $U$ in $X$ there is $f\in E(X)$ such that $f(x)=1$ and $f(X\backslash U)=0$. 

We need the following facts from \cite{gu}: 
\begin{itemize}
\item[(2.1)] If $X$ has a countable base and $\Phi\subset D(X)$ is a countable set, then there is a countable $QS$-algebra $E(X)\subset D(X)$ containing $\Phi$; 
\item[(2.2)] If $E(X)$ is a $QS$-algebra on $X$ and $U\subset X$ is an open set containing the points $x_1,x_2,..,x_k$ and $\lambda_1,\lambda_2,..,\lambda_k\in\mathbb Q$, then there exists $f\in E(X)$ such that $f(x_i)=\lambda_i$ for each $i$ and $f(X\backslash U)=0$.
     
 We also need the following condition for $QS$-algebras:     
\item[(2.3)] We consider the following condition for a $QS$-algebra $E(X)$ on $X$: For every compact set  $K\subset X$ and an open set $W$ containing $K$ there exists $f\in E(X)$ with $f|K=1$, $f|(X\backslash W)=0$ and $f(x)\in [0,1]$ for all $x\in X$. Note that 
if $X$ has a countable base $\mathcal B$, then there is a countable $QS$-algebra $E(X)$ on $X$ satisfying that condition. Indeed, we can assume that $\mathcal B$ is closed under finite unions and find $U,V\in\mathcal B$ such that $K\subset V\subset cl(V)\subset U\subset cl(U)\subset W$. Then consider the set $\Phi$ of all functions $f_{U,V}:X\to [0,1]$, where $cl(V)\subset U$ with $U,V\in\mathcal B$, such that  
    $f_{V,U}|cl(V)=1$ and $f_{V,U}|(X\backslash U)=0$. According to $(2.1)$, $\Phi$ can be extended to a countable $QS$-algebra $E(X)$ on $X$. 
\end{itemize}
Suppose $\overline X$ and $\overline Y$ are compactifications of the spaces $X$ and $Y$, respectively. Let $E(X)\subset D(X)$ and $E(Y)\subset D(Y)$ be $QS$-algebras on $X$ and $Y$ such that every $f\in LE(X)$ and $g\in LE(Y)$ can be extended to maps $\overline f\in C(\overline X,\overline{\mathbb R})$ and $\overline g\in C(\overline Y,\overline{\mathbb R})$, where $EL(X)$ and $EL(Y)$ are the linear hulls of $E(X)$ and $E(Y)$, respectively. Suppose also that there is a continuous surjection 
$\varphi_0:E(X)\to E(Y)$ such that $\varphi_0(f_1\pm f_2)=\varphi_0(f_1)\pm\varphi_0(f_2)$ for all $f_1,f_2\in E(X)$ (such a map $\varphi_0$ is called semi-linear). Let $\varphi:LE_p(X)\to LE_p(Y)$ be a linear continuous map extending $\varphi_0$.
For every $y\in\overline Y$ we define the {\em support of $y$} to be the set $\supp(y)$ of all $x\in\overline X$ satisfying the following condition: for every neighborhood $U\subset\overline X$ of $x$ there is $f\in LE(X)$ such that $f(X\backslash U)=0$ and $\overline{\varphi(f)}(y)\neq 0$, where  
$\overline{\varphi(f)}\in C(\overline Y,\overline{\mathbb R})$ is the extension of $\varphi(f)$. Our definition of supports is a modification of that notion given in \cite{ar1} and\cite{vv}.
Obviously, each $\supp(y)$ is a closed subset of $\overline X$.

The next proposition is an analogue of \cite[Proposition 4.1]{ev}.
\begin{pro}
Suppose $\overline X$ and $\overline Y$ are compactifications of the spaces $X$ and $Y$, respectively, and $E(X)\subset D(X)$, $E(Y)\subset D(Y)$ are $QS$-algebras on $X$ and $Y$ such that every $f\in LE(X)$ and $g\in LE(Y)$ can be extended to maps $\overline f\in C(\overline X,\overline{\mathbb R})$ and $\overline g\in C(\overline Y,\overline{\mathbb R})$. Suppose the following conditions hold:
\begin{itemize}
\item[(1)] For every finite open cover $\gamma=\{U_i\}_{i=1}^k$ of $\overline X$ there is a partition of unity $\{\overline f_i\}_{i=1}^k$ subordinated to $\gamma$ with $f_i\in E(X)$;
\item[(2)] The family $E(\overline X)=\{\overline f:f\in E(X)\}$ contains a $QS$-algebra of function from $C(\overline X)$ satisfying condition $(2.3)$;
\item[(3)] The real-valued elements of $E(\overline Y)=\{\overline g:g\in E(Y)\}$ is dense in $C_p(\overline Y)$;     
\end{itemize}
If there is a continuous semi-linear surjection $\varphi_0:E(X)\to E(Y)$ which can be continuously extended to a linear map $\varphi:LE_p(X)\to LE_p(Y)$ between the linear hulls of $E(X)$ and $E(Y)$, then we have:
\begin{itemize}
\item[(a)] If $U\subset\overline X$ is open and contains $\supp(y)$, then $\overline{\varphi(f)}(y)=0$ for every $f\in LE(X)$ with $f(U\cap X)=0$;
\item[(b)] For every $y\in\overline Y$ the set $\supp(y)$ is a non-empty closed subset of $\overline X$ and $\supp(y)$ is a finite subset of  $X$ provided $y\in Y$;
\item[(c)] The set-valued map $y\rightsquigarrow\supp(y)$ is lower semi-continuous, i.e. the set $\supp^{-1}(U)=\{y\in\overline Y:\supp(y)\cap U\neq\varnothing\}$ is open in $\overline Y$ for every open $U\subset\overline X$;
\item[(d)] For any $p,k$ the sets $\overline Y_{p,k}=\{y\in\overline Y:|\supp(y)|\leq k{~}\hbox{and}{~}a(y)\leq p\}$ and $\overline Y_k=\{y\in\overline Y:|\supp(y)|\leq k\}$
are closed in $\overline Y$ such that $Y\subset\bigcup_{p,k\geq 1}\overline Y_{p,k}$, where
$$a(y)=\sup\{|\overline{\varphi(f)}(y)|:f\in LE(X){~}\hbox{and}{~}|\overline f(x)|<1{~} \forall x\in supp(y)\}.$$ 
\item[(e)] If $y\in cl(\overline Y_{p,k}\cap Y)\backslash\overline Y_{k-1}$,  then there exist unique real numbers $\lambda_i(y)$, $i=1,2,..,k$, such that $\overline{\varphi(f)}(y)=\sum_{i=1}^k\lambda_i(y)\overline f(x_i(y))$ for all $f\in LE(X)$ and $\sum_{i=1}^k|\lambda_i(y)|=a(y)$, where $\supp(y)=\{x_1(y),.,x_k(y)\}$. Moreover, the functions $\lambda_i$ are continuous on the set $A(y)=\{z\in cl(\overline Y_{p,k}\cap Y)\backslash\overline Y_{k-1}:\supp(z)=\supp(y)\}$.
\end{itemize}
\end{pro}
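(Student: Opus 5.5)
We follow the scheme of \cite[Proposition 4.1]{ev}, replacing the surjection between $C_p$-spaces by the semi-linear surjection $\varphi_0$ and its linear extension $\varphi$. For $y\in\overline Y$ write $\mu_y(f)=\overline{\varphi(f)}(y)$, $f\in LE(X)$. This functional is linear; for $y\in Y$ it is continuous on $LE_p(X)$, and it may take the values $\pm\infty$ when $y\in\overline Y\setminus Y$. The order of proof will be (a), then (b), then (c), then (d), then (e).

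\emph{Step (a).} Take $f\in LE(X)$ with $f(U\cap X)=0$. By the definition of the support and compactness of $\overline X\setminus U$, we can cover $\overline X\setminus U$ by finitely many open sets $V_1,\dots,V_s$ such that $\mu_y(g)=0$ whenever $g\in LE(X)$ vanishes off some $V_j$. Put $V_0=U$. Condition (1) gives a partition of unity $\{\overline f_j\}_{j=0}^s$ subordinated to $\{V_j\}$ with $f_j\in E(X)$. Then $f=\sum_j f\cdot f_j$, and $f\cdot f_0=0$ because $f$ vanishes on $U\cap X$ while $f_0$ vanishes off $U$. Each $f\cdot f_j$ with $j\ge1$ lies in $LE(X)$ (since $E(X)$ is closed under products) and vanishes off $V_j$. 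Linearity of $\varphi$ then gives $\mu_y(f)=0$. One must check that no $\infty-\infty$ occurs; this holds because every summand is $0$.

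\emph{Step (b).} If $\supp(y)=\varnothing$, then (a) with $U=\varnothing$ gives $\mu_y\equiv0$ on $LE(X)$. Hence $\overline g(y)=0$ for all $g\in E(Y)$, since $\varphi_0$ is onto $E(Y)$. This contradicts (3): density in $C_p(\overline Y)$ forces some $\overline g(y)\ne0$. Now let $y\in Y$. Then $\mu_y$ is continuous on $LE_p(X)$, so there are a finite set $F\subset X$ and $\varepsilon>0$ with $|\mu_y(f)|<1$ whenever $|f|<\varepsilon$ on $F$. Scaling shows that $\mu_y(f)=0$ whenever $f|F=0$. Suppose some $x\in\supp(y)$ lies outside $F$; here $x$ may be in $\overline X\setminus X$, using condition (2). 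Choose a neighborhood $U$ of $x$ with $\overline{U}\cap F=\varnothing$. Then some $f$ vanishing off $U$ has $\mu_y(f)\ne0$, but $f|F=0$, a contradiction. Therefore $\supp(y)\subset F$, which is finite and contained in $X$.

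\emph{Step (c).} Let $x\in\supp(y)\cap U$. Pick a neighborhood $W$ of $x$ with $\overline W\subset U$ and $f$ vanishing off $W$ with $\overline{\varphi(f)}(y)\neq0$. By continuity of $\overline{\varphi(f)}$ into $\overline{\mathbb R}$, the same holds on a neighborhood $O$ of $y$. For $z\in O$, (a) applied with the open set $\overline X\setminus\overline W$ shows that $\supp(z)$ must meet $\overline W\subset U$. Hence $O\subset\supp^{-1}(U)$.

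\emph{Steps (d) and (e).} Closedness of $\overline Y_k$ follows from (c) by the standard argument. If $|\supp(y)|>k$, choose $k+1$ pairwise disjoint open sets meeting $\supp(y)$; by (c), every $z$ near $y$ also has support meeting all of them. For $\overline Y_{p,k}$, observe that $a$ is lower semicontinuous along sequences or nets with stable support size. Concretely, if $a(y)>p$, pick a witness $f$ with $|\overline f|<1$ on $\supp(y)$ and $|\mu_y(f)|>p$. Using condition (2) and (2.3), we may assume $|\overline f|<1$ on a neighborhood $G$ of $\supp(y)$. Near $y$, $\supp(z)$ meets each small neighborhood of each point of $\supp(y)$, and with $|\supp(z)|\le k=|\supp(y)|$ this places $\supp(z)\subset G$; continuity of $\overline{\varphi(f)}$ then gives $a(z)>p$. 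The inclusion $Y\subset\bigcup\overline Y_{p,k}$ follows from (b) together with the continuity estimate for $\mu_y$ on $F$, which bounds $a(y)$ by $1/\varepsilon$.

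For (e), take $y\in cl(\overline Y_{p,k}\cap Y)\setminus\overline Y_{k-1}$, so $\supp(y)=\{x_1,\dots,x_k\}$. For nearby $z\in\overline Y_{p,k}\cap Y$, $\mu_z$ is given by at most $k$ points with coefficient sums at most $p$. Limiting along a net, using (c) to send the points to the $x_i$ and compactness of coefficients in $[-p,p]^k$, shows that $\mu_y$ is a combination $\sum\lambda_i\overline f(x_i)$. Uniqueness comes from (2.2): choose $f$ separating the $x_i$ with $f=\delta_{ij}$ there. Then $\sum|\lambda_i|=a(y)$ by choosing $f$ with $\overline f(x_i)$ close to $\mathrm{sign}\,\lambda_i$. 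Continuity of $\lambda_i$ on $A(y)$ follows because $\lambda_i(z)=\overline{\varphi(f_i)}(z)$ for a fixed $f_i$ with $\overline f_i(x_j)=\delta_{ij}$.

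I expect the main obstacle to be the limiting argument in (e) and the closedness of $\overline Y_{p,k}$. Both require controlling extended-real values at points of $\overline Y\setminus Y$, and showing that coefficient bounds pass to limits, which depends on (2.3) to produce functions bounded by $1$ near $\supp(y)$.
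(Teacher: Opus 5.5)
Most of your proposal tracks the paper's proof. That covers (a) via the partition of unity from condition (1), finiteness in (b) via continuity of $l_y$, (c) via (a), closedness of $\overline Y_k$ via (c), and (e) via a net in $\overline Y_{p,k}\cap Y$. Deriving non-emptiness in (b) from (a) with $U=\varnothing$ is a tidy shortcut.

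The genuine gap is the closedness of $\overline Y_{p,k}$ in (d). Your argument needs $\supp(z)\subset G$ for $z$ near $y$, so that the witness $f$ is admissible in the definition of $a(z)$. Your counting only gives this when $|\supp(y)|=k$; you in fact write $k=|\supp(y)|$. But a point $y\notin\overline Y_{p,k}$ can have $|\supp(y)|=j<k$ and $a(y)>p$. Points $z\in\overline Y_k$ near $y$ then have one support point near each point of $\supp(y)$, plus up to $k-j$ further support points anywhere in $\overline X$, possibly where $|\overline f|\ge 1$. Nothing then bounds $a(z)$ from below, so lower semicontinuity of $a$ ``along nets with stable support size'' does not suffice.

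The missing idea, which the paper uses and your appeal to (2.3) points toward, is to make the witness globally bounded. Using the $QS$-algebra of condition (2), take $h\in E(X)$ with $0\le h\le 1$, with $\overline h=1$ on a closed neighborhood $cl(W_0)$ of $\supp(y)$, and with $\overline h=0$ off an open set $U'$ satisfying $cl(U')\subset\{|\overline f|<1\}$. Put $g=hf$. Then (a) gives $\overline{\varphi(g)}(y)=\overline{\varphi(f)}(y)$, while $|\overline g|<1$ on all of $\overline X$. So $g$ is admissible for every $z$, and $a(z)\ge|\overline{\varphi(g)}(z)|>p$ on a neighborhood of $y$. Hence $\{z: a(z)\le p\}$ is closed on its own, and $\overline Y_{p,k}=\overline Y_k\cap\{z: a(z)\le p\}$ is closed with no control of supports needed.

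A smaller omission affects (d) and (e). For $z\in Y$ you use that $\varphi(f)(z)=\sum_{x\in\supp(z)}\lambda_x(z)f(x)$ with $\sum|\lambda_x(z)|=a(z)$. You need this both for the bound $a(z)\le 1/\varepsilon$ and for the claim in (e) that the coefficient sums are at most $p$. Your estimate only shows that $\mu_z$ factors through the finite set $F$, which may be strictly larger than $\supp(z)$, whereas $a(z)$ constrains $f$ only on $\supp(z)$. To close this:
\begin{itemize}
\item Write $\mu_z(f)=\sum_{x\in F}\lambda_x f(x)$, using functions in $E(X)$ equal to $1$ at one point of $F$ and $0$ at the others.
\item Apply (a) to show $\lambda_x=0$ for $x\in F\setminus\supp(z)$, as the paper does at the end of its proof of (b).
\item Your sign-approximation argument then gives $\sum|\lambda_x(z)|=a(z)$.
\end{itemize}
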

\begin{proof}
$(a)$ Let $U\subset\overline X$ be an open set containing $\supp(y)$ for some $y\in\overline Y$ and $f(U\cap X)=0$ with $f\in LE(X)$. Since $\supp(y)$ is closed in $\overline X$, we can assume that $U$ is a finite union of open sets $V_i$, $i=1,2,..,k$.
Every $x\in\overline X\backslash U$ has a neighborhood $V_x$ such that $\overline{\varphi(g)}(y)=0$ for any $g\in LE(X)$ with $g(X\backslash V_x)=0$. 
Take a finite open cover $\{V_{x_1},..,V_{x_m}\}$ of $\overline X\backslash U$. Then 
$\gamma=\{V_1,..,V_k,V_{x_1},..,V_{x_m}\}$ is an open cover of $\overline X$ and there exits a partition of unity $\{\overline h_1,.,\overline h_k,\overline\theta _1,..,\overline\theta_m\}\subset E(\overline X)$ subordinated to $\gamma$. Hence, $h_i\cdot f, \theta_j\cdot f\in LE(X)$ for all $i,j$ and
$f=\sum_{i=1}^kh_i\cdot f+\sum_{j=1}^m\theta_j\cdot f$. Take a net $\{y_\alpha\}\subset Y$ with $\lim y_\alpha=y$. 
So, $\varphi(f)(y_\alpha)=\sum_{i=1}^k\varphi(h_i\cdot f)(y_\alpha)+\sum_{j=1}^m\varphi(\theta_j\cdot f)(y_\alpha)$.
Observe that $(h_i\cdot f)(x)=0$ for all $x\in X$, so $\varphi(h_i\cdot f)$ is the zero function on $Y$ and $\varphi(h_i\cdot f)(y_\alpha)=0$ for all $\alpha$ and $i=1,..,k$. 
Consequently, $\varphi(f)(y_\alpha)=\sum_{j=1}^m\varphi(\theta_j\cdot f)(y_\alpha)$.
On the other hand, $(\theta_j\cdot f)(x)=0$ for all $x\in X\backslash V_{x_j}$. 
So, $\overline{\varphi(\theta_j\cdot f)}(y)=0$. Since $\lim_\alpha\varphi(\theta_j\cdot f)(y_\alpha)=\overline{\varphi(\theta_j\cdot f)}(y)$ for all $j=1,..,m$ and 
$\lim_\alpha\varphi(f)(y_\alpha)=\overline{\varphi(f)}(y)$, we have $\overline{\varphi(f)}(y)=0$. This easily implies that $\overline{\varphi(f)}(y)=\overline{\varphi(g)}(y)$ for any $f,g\in LE(X)$ with $f|(U\cap X)=g|(U\cap X)$.  

$(b)$ Take $y\in\overline Y$ and a function $g\in E(Y)$ with $\overline g(y)\neq 0$ (this can be done
because the set of real-valued functions from $E(\overline Y)$ is dense in $C_p(\overline Y))$.
Choose $f_y\in E(X)$ such that $\varphi(f_y)=g$, so $\overline{\varphi(f_y)}(y)=\overline g(y)$. If $supp(y)=\varnothing$, then every $x\in\overline X$ has a neighborhood $U_x$ such that $\overline{\varphi(f)}(y)=0$ for any $f\in LE(X)$ with $f(X\backslash U_x)=0$. Choose a finitely cover 
$\gamma=\{U_{x_i}:i=1,2,..,k\}$ of $\overline X$ and a partition of unity $\{\overline h_i:i\leq k\}$ subordinated to $\gamma$. Hence, $f_y=\sum_{i=1}^kh_i\cdot f_y$ and $\varphi(f_y)(y_\alpha)=\sum_{i=1}^k\varphi(h_i\cdot f_y)(y_\alpha)$ for each $\alpha$, where  $\{y_\alpha\}\subset Y$ is a net converging to $y$.
Since $\lim_\alpha\varphi(h_i\cdot f_y)(y_\alpha)=\overline{\varphi(h_i\cdot f_y)}(y)=0$ for every $i$, we have 
$\overline{\varphi(f_y)}(y)=\sum_{i=1}^k\lim_\alpha\varphi(h_i\cdot f_y)(y_\alpha)=0$, a contradiction.

Because $\varphi$ is continuous, for every
$y\in Y$ the equality $l_y(f)=\varphi(f)(y)$ defines a continuous linear functional $l_y:LE_p(X)\to\mathbb R$. Thus, there are $\varepsilon >0$ and a finite set $K=\{x_1(y),x_2(y),..,x_k(y)\}\subset X$ such that $|l_y(f)|<1$ for all $f\in LE(X)$ with $|f(x)|<\varepsilon$, $x\in K$.
Using the linearity of $l_y$, one can show that $l_y(g)=0$ for all $g\in LE(X)$ with $g(x_i(y))=0$ for all $i$. 
Since $E(X)$ is a $QS$-algebra, for every $i$ there is a function $g_i\in E(X)$ such that $g_i(x_i(y))=1$ and $g_i(x_j(y))=0$ with $j\neq i$. Now, if 
$f\in LE(X)$ then 
$g=f-\sum_{i=1}^kg_i\cdot f(x_i(y))\in LE(X)$ and $g(x_i(y))=0$ for all $i$. So, $l_y(g)=0$ and 
$l_y(f)=\sum_{i=1}^k\lambda_i(y)f(x_i(y))$, where $\lambda_i(y)=\varphi(g_i)(y)$. 
Note that each $\lambda_i(y)$ is a real number because so is $\varphi(g_i)(y)$. Hence, for every $f\in LE(X)$ we have $\varphi(f)(y)=\sum_{i=1}^k\lambda_i(y)f(x_i(y))$ with $\lambda_i(y)=\varphi(g_i)(y)$. This implies that $\supp(y)\subset K$. Indeed, if $z\in\overline X\backslash K$ take a neighborhood $U$ of $z$ in $\overline X$ such that $U\cap K=\varnothing$. Then for every $f\in LE(X)$ with $f(X\backslash U)=0$ we have 
$\varphi(f)(y)=0$ since $f(K)=0$, so $z\not\in\supp(y)$.

Finally, let show that $\lambda_j(y)=0$ for every $x_j(y)\in K\backslash\supp(y)$. Take a function $f\in LE(X)$ with $f(x_j(y))=1$ and $f(x)=0$ for all $x\in U$, where $U$ is a neighborhood of $K\backslash\{x_j(y)\}$ such that $x_j(y)\not\in cl(U)$. Then, by condition $(a)$, $\varphi(f)(y)=0$. On the other hand,
$\varphi(f)(y)=\sum_{i=1}^k\lambda_i(y)f(x_i(y))=\lambda_j(y)$. So, $\lambda_j(y)=0$. Therefore, $\varphi(f)(y)=\sum\{\lambda_i(y)f(x_i(y)):x_i(y)\in\supp(y)\}$ for every $f\in LE(X)$.

$(c)$ Let $x_0\in supp(y_0)\cap U$, where $y_0\in\overline Y$ and $U\subset\overline X$ is open. Take a neighborhood $W$ of $x_0$ in $\overline X$ with
$cl(W)\subset U$. Then there is $f\in LE(X)$ such that $f(X\backslash W)=0$ and $\overline{\varphi(f)}(y_0)\neq 0$.
Striving for a contradiction, we can find a net $\{y_\alpha\}\subset\overline Y$ converging to $y_0$ such that $supp(y_\alpha)\cap U=\varnothing$ for every $\alpha$.
Hence, $\overline X\backslash cl(W)$ is a neighborhood of each $supp(y_\alpha)$. Since $f(X\backslash cl(W))=0$,  $\overline{\varphi(f)}(y_\alpha)=0$ for all $\alpha$. Therefore, 
$\lim_\alpha\overline{\varphi(f)}(y_\alpha)=\overline{\varphi(f)}(y_0)=0$, a contradiction.

$(d)$ Obviously for every $p,k$ we have $\overline Y_{p,k}=\overline Y_{k}\cap\widetilde Y_{p}$, where $\overline Y_{k}=\{y\in\overline Y:|\supp(y)|\leq k\}$ and $\widetilde Y_{p}=\{y\in\overline Y:a(y)\leq p\}$. Since the support map $\supp:\overline Y\rightsquigarrow\overline X$ is lower semi-continuous, the sets 
$\overline Y_{k}$ are closed. Indeed, if $y\not\in\overline Y_{k}$, then $\supp(y)$ contains at least $k+1$ different points $x_1(y),x_2(y),..,x_{k+1}(y)$ and we choose disjoint neighborhoods $O_i$ of $x_i(y)$. Then there exists a neighborhood $U$ of $y$ in $\overline Y$ such that $\supp(z)$ meets each $O_i$ for all $z\in U$. Hence, $U\subset\overline Y\backslash \overline Y_{k}$. 

Let show that the sets $\widetilde Y_{p}$ are also closed in $\overline Y$. Suppose that $a(y)>p$ for some $y\in\overline Y$. 
Then there exists $f\in LE(X)$ such that $|\overline f(x)|<1$ for all $x\in\supp(y)$ and 
$|\overline{\varphi(f)}(y)|>p$. Take
a neighborhood $U$ of $\supp(y)$ with $U\subset\{x\in\overline X:|\overline f(x)|<1\}$ and
choose another neighborhood $W$ of $\supp(y)$ such that $cl(W)\subset U$. Since $E(\overline X)$ contains a $QS$-algebra satisfying condition $(2.3)$,
 there is $h\in E(X)$ such that $\overline h(cl(W))=1$, 
$\overline h(\overline X\backslash U)=0$ and $h(x)\in [0,1]$ for all $x\in X$. Then  
$g=h\cdot f\in LE(X)$ and $|\overline g(x)|<1$ for all $x\in\overline X$. Moreover,
$g|(W\cap X)=f|(W\cap X)$. So, by condition $(a)$, $|\overline{\varphi(g)}(y)|=|\overline{\varphi(f)}(y)|>p$. Therefore, 
$V=\{z\in\overline Y:|\overline{\varphi(g)}(z)|>p\}$ is a neighborhood of $y$ with $V\cap\widetilde Y_{p}=\varnothing$. Hence,  
all $\overline Y_{p,k}$ are closed in $\overline Y$.

It remains to show that $Y\subset\bigcup_{p,k\geq 1}\overline Y_{p,k}$. So, fix $y\in Y$. By condition $(b)$, there are finitely many points $x_i(y)\in X$, $i\leq k$, such that $\supp(y)=\{x_1(y),x_2(y),..,x_k(y)\}$.
Let show there is $p$ with $y\in\overline Y_{p,k}$. According to the proof of condition $(b)$, there are real numbers $\lambda_i(y)$, $i\leq k$, such that
$\varphi(f)(y)=\sum_{i=1}^k\lambda_i(y)f(x_i(y))$ for all $f\in LE(X)$.
It suffices to show that $a(y)=\sum_{i=1}^k|\lambda_i(y)|$. To this end, consider the functions $f_n\in E(X)$, $n>1$, defined by $f_n(x_i(y))=\varepsilon_i(1-1/n)$, where 
$\varepsilon_i=1$ if $\lambda_i(y)>0$ and $\varepsilon_i=-1$ if $\lambda_i(y)<0$ (such functions $f_n$ exist because $E(X)$ is a $QS$-algebra). Clearly, $|f_n(x_i(y))|<1$ for all $i,n$ and 
$\lim_n\varphi(f_n)(y)=\sum_{i=1}^k|\lambda_i(y)|$. Hence $\sum_{i=1}^k|\lambda_i(y)|\leq a(y)$. The reverse inequality $a(y)\leq \sum_{i=1}^k|\lambda_i(y)|$ follows from $\varphi(f)(y)=\sum_{i=1}^k\lambda_i(y)f(x_i(y))$, $f\in LE(X)$. Therefore, $y\in\overline Y_{p,k}$ with $p\geq\sum_{i=1}^k|\lambda_i(y)|$.  

$(e)$ Let $y\in cl(\overline Y_{p,k}\cap Y)\backslash\overline Y_{k-1}$, so $\supp(y)=\{x_1(y),..,x_k(y)\}$ consists of $k$ points from $\overline X$. 
Take functions $g_i\in E(X)$ such that $\overline g_i(V_i)=1$ and $\overline g_i|\overline X\backslash U_i=0$, where $U_i, V_i$ are neighborhoods of $x_i(y)$ with $cl(V_i)\subset U_i$ and $U_i\cap U_j=\varnothing$ for $i\neq j$. 
(such functions exist because $E(\overline X)$ contains a $QS$-algebra satisfying condition $(2.3)$). Since 
$y\in cl(\overline Y_{p,k}\cap Y)\backslash\overline Y_{k-1}$ and the support function is lower semi-continuous, there is a net $\{y_\alpha\}\subset (\overline Y_{p,k}\cap Y)\backslash\overline Y_{k-1}$ converging to $y$ such that $\supp(y_\alpha)=\{x_1(y_\alpha),..,x_k(y_\alpha)\}$ with $x_i(y_\alpha)\in V_i\cap X$. Then, according to the proof of condition $(b)$, for every $\alpha$ the numbers $\lambda_i(y_\alpha)=\varphi(g_i)(y_\alpha)$ are finite and $\varphi(f)(y_\alpha)=\sum_{i=1}^k\lambda_i(y_\alpha)f(x_i(y_\alpha))$ for all $f\in LE(X)$. Moreover, $\sum_{i=1}^k|\lambda_i(y_\alpha)|=a(y_\alpha)\leq p$, 
$\lim_\alpha\varphi(f)(y_\alpha)=\overline{\varphi(f)}(y)$ and $\lim_\alpha\varphi(g_i)(y_\alpha)=\overline{\varphi(g_i)}(y)$ for all $i$. 
So, $\sum_{i=1}^k|\overline{\varphi(g_i)}(y)|\leq p$.
On the other hand, because of the lower semi-continuity of the support map and the fact that $\supp(y)$ and $\supp(y_\alpha)$ have the same number of elements, we can assume that $\lim_\alpha x_i(y_\alpha)=x_i(y)$ for all $i$. Consequently, $\lim_\alpha f(x_i(y_\alpha))=\overline f(x_i(y))$ and
we have  $\overline{\varphi(f)}(y)=\sum_{i=1}^k\overline{\varphi(g_i)}(y)\overline f(x_i(y))$ such that $\sum_{i=1}^k|\overline{\varphi(g_i)}(y)|\leq p$. 
Therefore, the numbers $\lambda_i(y)=\overline{\varphi(g_i)}(y)$ are finite and satisfy the equality $\overline{\varphi(f)}(y)=\sum_{i=1}^k\lambda_i(y)\overline f(x_i(y))$ for all $f\in LE(X)$. The last equality easily imply that 
$\lambda_i(y)=\overline{\varphi(g_i)}(y)$ for any function $g_i\in E(X)$ with $\overline g_i(x_i(y))=1$ and $\overline g_i(x_j(y))=0$ for all $j\neq j$. 
So, for every such $g_i$ and $z\in A(y)$ we have $\lambda_i(z)=\overline{\varphi(g_i)}(z)$ with $|\overline{\varphi(g_i)}(z)|\leq p$. This implies continuity of each $\lambda_i$ on the set $A(y)$. One can also show that $\sum_{i=1}^k|\lambda_i(y)|=a(y)$, see the arguments from the proof of $(d)$.
\end{proof}
For any space $X$ and an integer $k$ let $[X]^k$ denote the space of all $k$-points subsets of $\overline X$ endowed with the Vietoris topology. It is well know that if $X$ is metrizable, then the Hausdorff's metric on $[X]^k$ generates the Vietoris topology of $[X]^k$, see for example \cite{mi}.  Recall that if $(X,d)$ is a metric space and $A,B$ are two subsets of $X$, the Hausdorff's distance between $A$ and $B$ is $d_H(A,B)=\max\{h(A,B), h(B,A)\}$, where $h(A,B)=\sup\{d(a,B):a\in A\}$.
\begin{lem}
If $X$ is a separable metrizable space, then $\dim [X]^k\leq k\cdot\dim X$ for every $k$. 
\end{lem}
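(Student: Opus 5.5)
The plan is to realize $[X]^k$ as an open quotient of an open subset of the product $X^k$ and then use dimension theory for separable metrizable spaces. Let $\Delta_k=\{(x_1,\dots,x_k)\in X^k: x_i\neq x_j \text{ for some } i\neq j\}$; its complement is the set of $k$-tuples with pairwise distinct coordinates. So define $P_k=X^k\setminus\Delta_k$, the set of tuples with pairwise distinct coordinates. This is an open subset of $X^k$. Define $\pi:P_k\to[X]^k$ by $\pi(x_1,\dots,x_k)=\{x_1,\dots,x_k\}$.

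The map $\pi$ is continuous, open, surjective, and exactly $k!$-to-one. Its fibers are orbits of the symmetric group $S_k$ acting freely on $P_k$ by permuting coordinates. We then argue that $\pi$ is a local homeomorphism, using the Hausdorff metric description of the Vietoris topology. Fix a point $A=\{a_1,\dots,a_k\}\in[X]^k$ and let $\delta=\min_{i\neq j}d(a_i,a_j)$. Take $B(a_i,\delta/3)$ to be the open balls of radius $\delta/3$. Then the neighborhood $\prod_i B(a_i,\delta/3)\cap P_k$ of $(a_1,\dots,a_k)$ is mapped by $\pi$ homeomorphically onto the Hausdorff ball of radius $\delta/3$ around $A$. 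Indeed, any $B$ with $d_H(A,B)<\delta/3$ meets each ball $B(a_i,\delta/3)$ in exactly one point, since $B$ has exactly $k$ points and the balls are disjoint. This gives a continuous inverse by labeling.

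It follows that $[X]^k$ is covered by open sets, each homeomorphic to an open subset of $X^k$. Since $[X]^k$ is separable metrizable, by Lindelöf we may take countably many such sets $W_n$. Each $W_n$ is an $F_\sigma$ in the metrizable space $[X]^k$, as open sets in metric spaces are. Also $\dim W_n\leq\dim X^k\leq k\cdot\dim X$, using that dimension is monotone on subsets of separable metrizable spaces and the product theorem $\dim(X\times Y)\leq\dim X+\dim Y$ for separable metrizable spaces. Each $W_n$ is a countable union of closed sets of $[X]^k$ of dimension $\leq k\cdot\dim X$. The countable sum theorem then gives $\dim[X]^k\leq k\cdot\dim X$.

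The main obstacle is only the local homeomorphism verification, which is a routine check with the Hausdorff metric. The case $\dim X=\infty$ is trivial, and $k=1$ is immediate.
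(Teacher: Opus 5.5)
Your proof is correct, apart from a slip in the definition of $\Delta_k$. With ``$x_i\neq x_j$ for some $i\neq j$'' its complement is the thin diagonal, so it should read ``$x_i=x_j$ for some $i\neq j$''. You then describe $P_k$ explicitly as the tuples with pairwise distinct coordinates, so nothing downstream is affected.

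Your route differs from the paper's.

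\emph{The paper's route.} It covers $[X]^k$ by countably many \emph{closed} sets $W(U_1,\dots,U_k)=\{\{x_1,\dots,x_k\}:x_i\in cl(U_i)\}$, where the $U_i$ come from a countable base and have pairwise disjoint closures. It identifies each such set with $\prod_{i}cl(U_i)$ by quoting (from Michael) that the map $X^k\to[X]^{\le k}$, $(x_1,\dots,x_k)\mapsto\{x_1,\dots,x_k\}$, is closed. Hence its restriction to $\prod_i cl(U_i)$ is a closed continuous bijection onto $W(U_1,\dots,U_k)$. The product theorem and the countable sum theorem then apply to these closed pieces at once.

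\emph{Your route.} You work with \emph{open} pieces instead. You check by hand, with the Hausdorff metric, that $P_k\to[X]^k$ is a local homeomorphism, and take a countable subcover by Lindel\"of. You then need the extra step of writing each open piece as an $F_\sigma$ and using the subset theorem for arbitrary subspaces of separable metrizable spaces before the countable sum theorem applies.

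\emph{Comparison.} Your argument is self-contained: it does not depend on the closedness of the symmetrization map. It also exhibits more structure, since $[X]^k$ is the quotient of the open set $P_k\subset X^k$ by a free $S_k$-action and is therefore locally homeomorphic to $X^k$. The paper's pieces are closed from the start, so it needs no $F_\sigma$ decomposition and no general subset-theorem step.
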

\begin{proof}
 Choose a countable base 
$\mathcal B$ for $X$. For every $k$-tuple $(U_1,U_2,..,U_k)$ of elements of $\mathcal B$ with pairwise disjoint closures let $$W(U_1,U_2,..,U_k)=\{\{x_1,x_2,..,x_k\}\in [X]^k:x_i\in cl(U_i), i=1,2,..,k\}.$$ Every $W(U_1,U_2,..,U_k)$ is homeomorphic to the product
$\prod_{i=1}^kcl(U_i)$. Indeed, the map $h:X^k\to [X]^{\leq k}$, defined by $h((x_1,x_2,..,x_k))=\{x_1,x_2,..,x_k\}$. According to \cite{mi}, $h$ is a continuous and closed surjection. Obviously, the restriction $h|\prod_{i=1}^kcl(U_i)$ is a closed bijection onto $W(U_1,U_2,..,U_k)$. So, it is a homeomorphism.
Because $\dim cl(U_i)\leq\dim X\leq m$, by \cite[Theorem 3.4.9]{en}, $\dim W(U_1,U_2,..,U_k)\leq k\cdot m$. Moreover, the family of all $W(U_1,U_2,..,U_k)$ is countable and covers $[X]^k$. Hence, by the Countable Sum Theorem, $\dim [X]^k\leq k\cdot\dim X$. 
 \end{proof}
\begin{pro}
Suppose in the hypotheses of Proposition $2.1$ we have the following additional conditions:
\begin{itemize}
\item Both $\overline X$ and $\overline Y$ are metric compactifications of $X$ and $Y$, respectively;
\item $|\supp(y)|\leq m$ for every $y\in Y$;
\end{itemize}
Then $\dim Y\leq m\cdot\dim\overline X$. 
\end{pro}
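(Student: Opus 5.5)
We will decompose $Y$ into countably many closed-in-$Y$ pieces, map each piece into a hyperspace $[\overline X]^k$ by the support map, and control the dimension of each piece through its fibres. By Proposition 2.1(d) and the hypothesis $|\supp(y)|\le m$, we have $Y=\bigcup_{p\ge1}\bigcup_{k=1}^m (\overline Y_{p,k}\cap Y)$. Since $\overline Y$ is metrizable, each $\overline Y_{p,k}\setminus\overline Y_{k-1}$ is an $F_\sigma$ in $\overline Y$. So the sets
$$Y_{p,k}=(\overline Y_{p,k}\setminus \overline Y_{k-1})\cap Y$$
cover $Y$ and are $F_\sigma$ in the separable metrizable space $Y$. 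By the Countable Sum Theorem it therefore suffices to prove $\dim Y_{p,k}\le m\cdot\dim\overline X$ for all $p$ and all $k\le m$.

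Fix $p,k$ and consider $s:Y_{p,k}\to[\overline X]^k$, $s(y)=\supp(y)$. Continuity of $s$ in the Vietoris (Hausdorff metric) topology follows from two facts on $\overline Y\setminus\overline Y_{k-1}$. Lower semi-continuity is Proposition 2.1(c). For the other direction, $\supp(y)$ has exactly $k$ points and nearby $\supp(z)$ has at most $k$ points, each meeting the $k$ disjoint neighbourhoods of the points of $\supp(y)$; this forces $\supp(z)$ to lie in their union. By Lemma 2.2 applied to the compact metric space $\overline X$, $\dim[\overline X]^k\le k\dim\overline X\le m\dim\overline X$.

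Next we analyse the fibres. For a fixed $k$-point set $S=\{x_1,\dots,x_k\}$ and $y\in s^{-1}(S)$, Proposition 2.1(e) gives reals $\lambda_i$ with $\overline{\varphi(f)}(y)=\sum\lambda_i(y)\overline f(x_i)$ for all $f\in LE(X)$. The $\lambda_i$ are continuous on $A(y)\supset s^{-1}(S)$, and all $\lambda_i(y)\neq0$ because each $x_i\in\supp(y)$. Hence the map
$$\Lambda:s^{-1}(S)\to\mathbb R^k,\qquad \Lambda(y)=(\lambda_1(y),\dots,\lambda_k(y)),$$
is continuous. It is injective, since if $\Lambda(y)=\Lambda(y')$ then $\overline{\varphi(f)}(y)=\overline{\varphi(f)}(y')$ for all $f$. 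Because the real-valued elements of $E(\overline Y)$ are dense in $C_p(\overline Y)$ (condition (3)) and $\varphi$ is onto $E(Y)$, these values separate points, so $y=y'$.

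The main obstacle is the dimension count. A naive Hurewicz-type estimate $\dim Y_{p,k}\le\dim[\overline X]^k+\sup\dim(\text{fibre})$ would add up to $k$ and spoil the bound. The plan is to combine $s$ and $\Lambda$ more cleverly. Locally, on a set where $\supp(y)$ stays in fixed disjoint neighbourhoods $U_1,\dots,U_k$, the coefficients are given globally by $\lambda_i(y)=\overline{\varphi(g_i)}(y)$ with $g_i$ as in the proof of (e). So $y\mapsto(s(y),\lambda(y))$ is a continuous injection into $\prod cl(U_i)\times(\mathbb R\setminus\{0\})^k$. To avoid the extra $k$ dimensions, we would try the normalisation trick: the evaluation $\overline{\varphi(f)}(y)$ for a countable dense family of $f$ already determines $y$. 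We would then show that the fibre of $s$ over each point is zero-dimensional, being homeomorphic via $\Lambda$ to a subset of $\mathbb R^k$ that is closed and discrete-ish in the relevant sense. The intended justification is that surjectivity of $\varphi_0$ onto the countable set $E(Y)$ forces only countably many coefficient vectors per support, since each $y$ is pinned down by the $\mathbb Q$-valued data of the countable algebra. Then Hurewicz's theorem for the closed map (after passing to closed pieces) gives $\dim Y_{p,k}\le\dim[\overline X]^k+0\le m\dim\overline X$. If the countability of fibres proves delicate, the fallback is to replace $Y_{p,k}$ by countably many closed pieces on which $\lambda$ takes values in a fixed small box, and argue that on each such piece $s$ is injective.
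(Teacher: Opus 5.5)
There is a genuine gap at exactly the step you flag as the main obstacle. You never prove that the fibres of $s$ are zero-dimensional, and the justification you sketch does not work. Countability of $E(Y)$ gives no bound on the size of a fibre. A countable family of real-valued functions can separate continuum many points (a countable $QS$-algebra on $[0,1]$ does). Moreover, the values $\overline{\varphi(f)}(y)$ are arbitrary reals, not ``$\mathbb Q$-valued data'', so nothing forces only countably many coefficient vectors per support. The fallback is also unsupported: two distinct points of one fibre can have coefficient vectors as close as you like, so confining $\Lambda$ to a small box does not make $s$ injective.

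The missing idea (Claims 2--3 in the paper) uses ingredients you already have, but linear-algebraically rather than through injectivity of $\Lambda$. Fix $S=\{x_1,\dots,x_k\}$ and let $A$ be the fibre over $S$. For $z\in A$ and $f\in LE(X)$ you have $\overline{\varphi(f)}(z)=\sum_i \overline f(x_i)\lambda_i(z)$. So every restriction $\overline{\varphi(f)}|A$ lies in the span $L$ of $\lambda_1|A,\dots,\lambda_k|A$, a subspace of $\mathbb R^A$ of dimension at most $k$. By condition (3) and surjectivity of $\varphi_0$, the real-valued restrictions $\overline g|A$, $g\in E(Y)$, are dense in $C_p(A)$. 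A finite-dimensional subspace of a Hausdorff topological vector space is closed, so $C_p(A)\subset L$, and therefore $|A|\le k$. The fibres are thus finite, not merely zero-dimensional.

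A second point needs care. Hurewicz's theorem requires a closed map, and $s$ on $Y_{p,k}$ need not be closed; a continuous one-to-one map can have a domain of larger dimension than its range. So your phrase ``after passing to closed pieces'' must be made precise. The paper writes $\overline Y_{p,k}\setminus\overline Y_{k-1}$ as a countable union of compact sets $F'_n$ and puts $F_n=cl(Y\cap F'_n)$. It then restricts the support map to $H_n=S^{-1}(K_n)\to K_n=S(F_n\cap Y)\subset[\overline X]^k$. This restriction is perfect with finite fibres; fibre points lying in $\overline Y\setminus Y$ get their coefficient representation from Proposition 2.1(e). Hence $\dim(Y\cap F_n)\le\dim H_n\le\dim K_n\le k\cdot\dim\overline X$, and the Countable Sum Theorem finishes the proof.
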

\begin{proof}
Since the support map $\overline Y\rightsquigarrow X$ is lower semi-continuous (see proposition 2.1(c)), $|\supp(y)|\leq m$ for all $y\in\overline Y$. 
By Proposition 2.1, the sets $\overline Y_{p,k}=\{y\in\overline Y:|\supp(y)|\leq k{~}\hbox{and}{~}a(y)\leq p\}$ and $\overline Y_k=\{y\in\overline Y:|\supp(y)|\leq k\}$ are closed in $\overline Y$. Because $|\supp(y)|\leq m$, $\overline Y=\overline Y_{m}$ and $Y\subset\bigcup\{\overline Y_{p,k}:1\leq k\leq m, p\geq 1\}$. For every $p\geq 1$ and $k\geq 2$ we define 
$$M(p,1)=\overline Y_{p,1}{~}\hbox{and}{~}M(p,k)=\overline Y_{p,k}\backslash\overline Y_{p,k-1}.$$
Some $M(p,k)$ could be empty but $Y\subset\bigcup\{M(p,k):1\leq k\leq m, p\geq 1\}$. Since all $\overline Y_{p,k}$ are closed in $\overline Y$, each
$M(p,k)$ is the union of countably many compact sets $F_n'(p,k)$, $n\geq 1$, and let $F_n(p,k)=cl(Y\cap F_n'(p,k))$.   
Because $\supp(y)$ consists of $k$ different points for any $y\in F_n(p,k)$, we have a map $S_n(p,k):F_n(p,k)\to [\overline X]^k$, 
$S_n(p,k)=\supp(y)$. 
According to Proposition 2.1(c), each $S_n(p,k)$ is continuous.
 Everywhere below for every $y\in F_n(p,k)$ denote by $A(y)$ the set $\{z\in F_n(p,k):S_n(p,k)(z)=S_n(p,k)(y)\}$, i.e, the fiber $(S_n(p,k))^{-1}(S_n(p,k)(y))$ generated by $y$. Since all $F_n(p,k)$ are compact and $S_n(p,k)$ are continuous, each $A(y)$ is a  compact subset of $F_n(p,k)$. 
\begin{claim}
Let $y\in Y\cap F_n(p,k)$ and $\supp(y)=\{x_1(y),x_2(y),..,x_k(y)\}$. Then  
there exist unique finite numbers $\lambda_i(y)$ such that $\overline{\varphi(f)}(z)=\sum_{i=1}^k\lambda_i(z)f(x_i(y))$ for all $f\in LE(X)$ and $z\in A(y)$.
 Moreover, $\sum_{i=1}^k\lambda_i(y)\leq p$ and the functions $\lambda_i$ are continuous on the set $A(y)$.
\end{claim}
Indeed, Claim 1 follows from Proposition 2.1(e). $\diamondsuit$

\begin{claim}
For every $y\in Y\cap F_n(p,k)$ there is a linear continuous map
$\varphi_y:C_p(\supp(y))\to C_p(A(y))$ such that $\varphi_y(C_p(\supp(y))$ is dense in  $C_p(A(y))$. 
\end{claim}
For every $z\in A(y)$ we have $\supp(z)=\supp(y)=\{x_1(y),..,x_k(y)\}\subset X$.
According to Claim 1, there are continuous real-valued functions $\lambda_i$ on $A(y)$ such that $\overline{\varphi(f)}(z)=\sum_{i=1}^k\lambda_i(z)f(x_i(y))$ for all $f\in LE(X)$ and $z\in A(y)$. So, 
for any  $h\in C(\supp(y))$ the formula
$\varphi_y(h)(z)=\sum_{i=1}^k\lambda_i(z)h(x_i(y))$  defines a continuous function $\varphi_y(h)\in C(A(y)$. 
Continuity of $\varphi_y$ with respect to the point-wise convergence topology is obvious. We claim that $\varphi_y(C_p(\supp(y))$ is dense in  $C_p(A(y))$. Indeed, take $\theta\in C_p(A(y))$ and its neighborhood $V\subset C_p(A(y))$. Then extend $\theta$ to a function $\overline\theta\in C(\overline Y)$.   
Because the set of real-valued elements of $E_p(\overline Y)$ is dense in $C_p(\overline Y)$, there is $\overline g\in E(\overline Y)$ with $\overline g|A(y)\in V$. 
Next, choose $f\in E(X)$ such that $\varphi(f)=g$ and let $h=f|\supp(y)$. Then  
$\overline g(z)=\overline{\varphi(f)}(z)=\sum_{i=1}^k\lambda_i(z)f(x_i(y))=\varphi_y(h)(z)$ for every $z\in A(y)$ and $\varphi_y(h)\in V$. Therefore, 
$\varphi_y(C_p(\supp(y))$ is dense in  $C_p(A(y))$.  $\diamondsuit$

\begin{claim}
For every $y\in Y\cap F_n(p,k)$ we have $|A(y)|\leq k$ and $\dim (Y\cap F_n(p,k))\leq k\cdot\dim\overline X$. 
\end{claim}
Since $|\supp(y)|=k$, $C_p(\supp(y))$ is isomorphic to $\mathbb R^k$. According to Claim 2, there is a linear continuous map from $C_p(\supp(y))$ to $C_p(A(y))$ whose image is dense in $C_p(A(y))$. It is well known that every finite-dimensional linear subspace of a locally convex vector topological space is closed. Hence, 
$\varphi_y(C_p(\supp(y)))=C_p(A(y))$. This implies $|A(y)|\leq k$. 

Because $S_n(p,k)$ is continuous and the set $F_n(p,k)$ is compact, $S_n(p,k)$ is a perfect map. So, the restriction 
$$\widetilde S_n(p,k)=S_n(p,k)|H_n(p,k):H_n(p,k)\to K_n(p,k)$$ is also a perfect map, where $K_n(p,k)=S_n(p,k)(F_n(p,k)\cap Y)$ and
$H_n(p,k)=S_n(p,k)^{-1}(K_n(p,k))$. Since $K_n(p,k)\subset [\overline X]^k$ and, by Lemma 2.2, $\dim [\overline X]^k\leq k\cdot\dim\overline X$, we have 
$\dim K_n(p,k)\leq k\cdot\dim\overline X$, see \cite[Theorem 4.1.3 and Theorem 4.1.7,]{en}. 
Moreover, all fibers of $\widetilde S_n(p,k)$ are of the form  $A(y)$ with $y\in Y\cap F_n(p,k)$ and, since they are finite,
$\dim H_n(p,k)\leq k\cdot\dim\overline X$, see \cite[Theorem 3.3.10]{en}. Finally, $Y\cap F_n(p,k)\subset H_n(p,k)$ implies $\dim (Y\cap F_n(p,k))\leq k\cdot\dim\overline X$. $\diamondsuit$

Now, we can complete the proof of Proposition 2.3. Since, $Y=\bigcup_{k\leq m}\{Y\cap F_n(p,k):n,p=1,2,..\}$ and 
$\dim(Y\cap F_n(p,k))\leq k\cdot\dim\overline X$ for all $n,p$ and $k\leq m$,  by the Countable Sum Theorem for $\dim$ we have  $\dim Y\leq m\cdot\dim\overline X$.
\end{proof}
\section{Some more preliminary results}
\begin{lem}
Let $X$ be a $k$-dimensional separable metric space and $Z_0$ be a metric compactification of $X$. Then for every countable subfamily $\Phi$ of 
$C(X)$ there is a metrizable compactification $Z_1$ of $X$ and a map $\theta^1_0: Z_1\to Z_0$ such that:
\begin{itemize}
\item $\dim Z_1=k$; 
\item every $f\in\Phi$ is continuously extendable to a map $\overline f:Z_1\to\overline{\mathbb R}$;
\item $\theta^1_0\circ j_1=j_0$, where each $j_i:X\hookrightarrow Z_i$ is the corresponding embedding.
\end{itemize}    
\end{lem}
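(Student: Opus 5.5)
The plan is to obtain $Z_1$ as the closure of a diagonal-type embedding of $X$ into $Z_0\times\overline{\mathbb R}^{\mathbb N}$, and then lower its dimension to $k$ using the standard dimension-preserving compactification theorem for separable metric spaces.

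\textbf{Step 1: the auxiliary compactification.} Write $\Phi=\{f_n:n\geq 1\}$ and regard each $f_n$ as a map into the extended line $\overline{\mathbb R}\cong[-1,1]$. Define
$$e:X\to Z_0\times\overline{\mathbb R}^{\mathbb N},\qquad e(x)=(j_0(x),(f_n(x))_n).$$
Since the first coordinate is already an embedding, $e$ is an embedding, and $W=cl(e(X))$ is a metrizable compactification of $X$. The restrictions to $W$ of the projections give the following.
\begin{itemize}
\item Every $f_n$ extends continuously to $W$, namely by the $n$-th projection.
\item There is a map $\pi:W\to Z_0$ with $\pi\circ e=j_0$, namely the first projection.
\end{itemize}
However, $\dim W$ may exceed $k$, so $W$ must be modified.

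\textbf{Step 2: reducing the dimension to $k$.} I would invoke the classical theorem that for a separable metrizable $X$ with $\dim X=k$ and a map $g:X\to K$ into a metrizable compactum, there is a metrizable compactification $Z_1$ of $X$ with $\dim Z_1=k$ to which $g$ extends. This is the factorization/compactification theorem of Hurewicz, or Pasynkov's $\omega$-mapping version, as found in Engelking's book. Apply it to $g=e:X\to W$. This yields $Z_1\supset X$ with $\dim Z_1=k$ and a map $\overline e:Z_1\to W$ extending $e$.

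With this in hand, the required objects are:
$$\theta^1_0=\pi\circ\overline e,\qquad \overline f_n=p_n\circ\overline e,$$
where $p_n$ is the $n$-th projection. The identity $\theta^1_0\circ j_1=j_0$ holds because it holds on $X$ by construction.

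\textbf{Main obstacle: proving the theorem in Step 2 if it cannot be cited.} If the theorem is not available as a citation, I would prove it directly as follows.
\begin{enumerate}
\item Take a countable family of finite functionally open covers of $X$ of order $\leq k$ that refines the pullbacks of a countable base of $W$. These exist because $\dim X=k$.
\item Take partitions of unity subordinated to these covers. Each gives a map $X\to N_i$ into the nerve, a polyhedron of dimension $\leq k$.
\item Take the closure of the diagonal image of $X$ in $W\times\prod_i N_i$.
\end{enumerate}
Controlling the dimension of this closure is the subtle part. It is handled by Mardešić's factorization or an inverse-limit argument: one arranges the covers to be successively refined, so that the closure becomes an inverse limit of compacta of dimension $\leq k$.

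Finally, $\dim Z_1\geq k$ is automatic, since $X\subset Z_1$ gives $\dim X\leq\dim Z_1$. Hence $\dim Z_1=k$ exactly.
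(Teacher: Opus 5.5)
Your proposal is correct and follows the paper's route. The paper also takes the closure $K$ of the diagonal image of $j_0$ and all $f\in\Phi$ in $Z_0\times\prod_{f\in\Phi}Z_f$. It then obtains your Step 2 directly by applying the Marde\v{s}i\'{c} factorization theorem to the canonical map $\beta X\to K$, using $\dim\beta X=k$; the factor map $\nu:\beta X\to Z_1$ is an embedding on $X$ because its composition with $Z_1\to K$ is. So your nerve-based fallback, which as written does not actually control the dimension of the closure, can be replaced by that argument, and your remark that $\dim Z_1\geq k$ follows from $X\subset Z_1$ supplies a point the paper leaves implicit.
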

\begin{proof}
For every $f\in\Phi$ denote by $Z_f$ the closure of $f(X)$ in $\overline{\mathbb R}$. Consider the diagonal product $h$ of the maps $j_0:X\hookrightarrow Z_0$ and $\triangle\{f:f\in\Phi\}$, where $j_0:X\hookrightarrow Z_0$ is the embedding of $X$. Then the closure $K$ of $h(X)$ in the product
$Z_0\times\prod_{f\in\Phi}Z_f$ is a compactification of $X$ such that every $f\in\Phi$, can be continuously extended to a map $\widetilde f:K\to\overline{\mathbb R}$. 
Let $\theta:\beta X\to K$ be the map witnessing that $\beta X$ is a compactification
of $X$ larger than $K$. Since $\dim\beta X=k$, by the Marde\v{s}i\'{c} factorization theorem \cite[Theorem 3.4.1]{en} there is a metrizable compactum $Z_1$ and maps $\nu:\beta X\to Z_1$ and $\eta:Z_1\to K$ such that $\dim Z_1=k$ and $\theta=\eta\circ\nu$. Evidently, $\nu|i(X)$ is a homeomorphism, where $i:X\hookrightarrow\beta X$ is the embedding. So, $Z_1$ is a compactification of $X$ and $j_1=\nu\circ i:X\to Z_1$ is an embedding. Because every $f\in\Phi$ is extendable to a function $\widetilde f:K\to\overline{\mathbb R}$, the composition $\overline f=\widetilde f\circ\eta$ is an extension of $f$ over $Z_1$. Obviously, the composition $\theta^1_0=(\pi|K)\circ\eta$ satisfies the equality $\theta^1_0\circ j_1=j_0$, where 
$\pi:Z_0\times\prod_{f\in\Phi}Z_f\to Z_0$ is the projection.
\end{proof}
Note that, if $\Phi$ in Lemma 3.1 consists of bounded functions, then all extensions $\overline f$ are real-valued functions on $Z_1$. 

For every space $X$ let $\mathcal F_X$ be the class of all maps from $X$ onto second countable spaces. For any two maps $h_1,h_2\in\mathcal F_X$ we write
$h_1\succ h_2$ if there exists a continuous map $\theta:h_1(X)\to h_2(X)$ with $h_2=\theta\circ h_1$. If $\Phi\subset D(X)$ we denote by $\triangle\Phi$ the diagonal product of all $f\in\Phi$. Clearly, $(\triangle\Phi)(X)$ is a subspace of the product $\prod\{\mathbb R_f:f\in\Phi\}$, and 
let $\pi_f:(\triangle\Phi)(X)\to\mathbb R_f$ be the projection.
Following \cite{gu}, we call a set $\Phi\subset D(X)$ {\em admissible} if the family
$\pi(\Phi)=\{\pi_f:f\in\Phi\}$ is a $QS$-algebra on $(\triangle\Phi)(X)$. 
We are using the following facts:
\begin{itemize}
\item[(3.1)] $\dim X\leq n$ if and only if for every $h\in\mathcal F_X$ there exists a $h_0\in\mathcal F_X$ such that $\dim h_0(X)\leq n$ and $h_0\succ h$ \cite{p}.
\item[(3.2)] If $\dim X\leq n$ and $\Phi\subset C(X)$ is countable, then there exists a countable admissible set $\Theta\subset C(X)$ containing $\Phi$ with $\dim(\triangle\Theta)(X)\leq n$. It follows from the proof of \cite[Lemma 2.2]{gu} that we can choose $\Theta\subset C^*(X)$ provided that
    $\Phi\subset C^*(X)$.
\item[(3.3)] For every countable $\Phi'\subset C(X)$ there is a countable admissible set $\Phi$ containing $\Phi'$ such that $(\triangle\Phi)(X)$ is homeomorphic to $(\triangle\Phi')(X)$. According to the proof of \cite[Lemma 2.4]{gu}, $\Phi$ could be taken to be a subset of $C^*(X)$ if $\Phi'\subset C^*(X)$. Moreover, if $\Phi'$ satisfies condition $(2.3)$, then $\pi(\Phi)$ also satisfies that condition.
\item[(3.4)] If $\{\Psi_n\}$ is an increasing sequence of admissible subsets of $C(X)$, then $\Psi=\bigcup_n\Psi_n$ is also admissible, see \cite[Lemma 2.5]{gu}.
    \end{itemize}
The next lemma is an analogue of \cite[Lemma 4.3]{ev}.
\begin{lem}
Let $X$ be a $k$-dimensional space and $\Psi_0\subset D(X)$ be a countable set. Then there is a countable admissible set $\Psi\subset D(X)$ containing $\Psi_0$
and a $k$-dimensional metrizable compactification $\overline X_\Psi$ of $X_\Psi=(\triangle\Psi)(X)$ having a countable base $\mathcal B$ such that:
\begin{itemize}
\item $\overline X_\Psi=(\triangle\overline\Psi)(\beta X)$ with $\overline\Psi=\{\overline h:h\in\Psi\}\subset C(\beta X,\overline{\mathbb R})$;
\item Each $\pi_h$, $h\in\Psi$, is extendable to a map $\overline\pi_h:\overline X_\Psi\to\overline{\mathbb R}$; 
\item $E(X_\Psi)=\{\pi_h:h\in\Psi\}\subset D(X_\Psi)$ is a countable $QS$-algebra on $X_\Psi$ and $E(\overline X_\Psi)=\{\overline\pi_{h}:h\in\Psi\}$ contains a countable $QS$-algebra on $\overline X_\Psi$ satisfying condition $(2.3)$;
\item For every finite open cover $\gamma$ of $\overline X_\Psi$ with elements from $\mathcal B$ the family $E(\overline X_\Psi)$ contains a partition of unity subordinated to $\gamma$.
\end{itemize}
\end{lem}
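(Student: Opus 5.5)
We build $\Psi$ as the union of an increasing sequence $\Psi_0'\subset\Psi_1\subset\Psi_2\subset\dots$ of countable admissible subsets of $D(X)$, alternating between three kinds of enlargement, and then take $\overline X_\Psi=(\triangle\overline\Psi)(\beta X)$, where $\overline h\in C(\beta X,\overline{\mathbb R})$ is the extension of $h$. Each $h\in\Psi$ extends to $\beta X$ with values in $\overline{\mathbb R}$, so $(\triangle\overline\Psi)(\beta X)$ is a compact metrizable subspace of $\overline{\mathbb R}^\Psi$. It contains $X_\Psi=(\triangle\Psi)(X)$ as a dense subspace, and the projections $\overline\pi_h$ are the required extensions of $\pi_h$. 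The first two bullets therefore hold for any countable $\Psi$. Admissibility of the union comes from (3.4), which gives the $QS$-algebra part of the third bullet.

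\emph{Step 1: dimension.} Since $\dim\beta X=k$, we control $\dim\overline X_\Psi$ with the Marde\v{s}i\'{c} factorization theorem, in the spirit of Lemma 3.1. At stage $n$ we have the compact metrizable image $K_n=(\triangle\overline\Psi_n)(\beta X)$. We factor $\triangle\overline\Psi_n:\beta X\to K_n$ through a metrizable compactum $Z_n$ with $\dim Z_n\le k$. The map $\beta X\to Z_n$ is determined by countably many functions of $C(\beta X)$, and we may take them valued in $[0,1]$. Their restrictions to $X$ are bounded continuous functions, so they lie in $D(X)$ in both cases, and we add them to $\Psi_{n+1}$. Doing this at every stage gives an inverse sequence: $\overline X_\Psi$ is the limit of $K_n$, and the $Z_n$ are cofinal in it. Since $\dim Z_n\le k$, we get $\dim\overline X_\Psi\le k$. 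Equality holds because $\overline X_\Psi$ is a compactification of $X$, so it maps onto it via $\beta X$, and any compactification of a $k$-dimensional $X$ whose dimension is at most $k$ has dimension exactly $k$ (using $\dim X=\dim\beta X$ and the fact that $X$ is dense). To apply Marde\v{s}i\'{c} conveniently we choose the factorization so that the new functions are finite-valued.

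\emph{Step 2: admissibility and condition (2.3).} After each dimensional enlargement we apply (3.3) to pass to a countable admissible set containing it. By the remark in (3.3), this keeps us inside $C^*(X)$ when the input is bounded, and it preserves (2.3). To force (2.3) on $\overline X_\Psi$ we fix a countable base $\mathcal B_n$ of $K_n$ closed under finite unions. For every pair $V,U\in\mathcal B_n$ with $cl(V)\subset U$ we add the restriction to $X$ of $g_{V,U}\circ\triangle\overline\Psi_n$, where $g_{V,U}:K_n\to[0,1]$ is Urysohn with $g_{V,U}|cl(V)=1$ and $g_{V,U}|(K_n\setminus U)=0$. The preimages of $\bigcup_n\mathcal B_n$ form a countable base $\mathcal B$ of $\overline X_\Psi$. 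Given compact $K\subset W$, we find $V,U$ from some $\mathcal B_n$ pulled back with $K\subset V\subset cl(V)\subset U\subset W$, and the added function does the job. This is the argument of (2.3).

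\emph{Step 3: partitions of unity.} For every finite cover $\gamma=\{U_1,\dots,U_r\}\subset\bigcup_n\mathcal B_n$ of $K_n$, pulled back, we fix a partition of unity on $K_n$ subordinated to it. We add its compositions with $\triangle\overline\Psi_n$, restricted to $X$, to $\Psi_{n+1}$. There are countably many such covers, so countability is kept. Each member of $\mathcal B$ is the pullback of an element of some $\mathcal B_n$, and the $\mathcal B_n$ increase, so any finite cover from $\mathcal B$ comes from a single stage and is handled there.

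The main obstacle is bookkeeping consistency rather than any single step. Each application of (3.3) may change the space $X_{\Psi_n}$, which in turn changes $K_n$ and its base. Also, the functions added at stage $n$ must remain members of $E(\overline X_\Psi)$ in the limit, and not merely members of its linear hull. I would handle this by always taking the admissible closure and noting that $\Psi_n\subset\Psi$ implies $\pi_h$ for $h\in\Psi_n$ is literally an element of $E(X_\Psi)$. One more subtlety is that extensions of unbounded functions take values in $\overline{\mathbb R}$. Every added auxiliary function is $[0,1]$-valued, so the partitions of unity and the (2.3)-functions are genuinely real-valued on $\overline X_\Psi$.
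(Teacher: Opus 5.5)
Your construction is correct for the bound $\dim\overline X_\Psi\le k$, and it is essentially the paper's proof.

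The paper also interleaves Marde\v{s}i\'{c} factorizations (packaged as Lemma 3.1) to get an inverse sequence of metric compacta $Z_n$ with $\dim Z_n\le k$ whose limit is $\overline X_\Psi$. At each stage it adds:
pulled-back functions witnessing $(2.3)$ (the $QS$-algebras $C_n$);
partitions of unity for finite covers from a base that is compatible along the bonding maps.
It then closes up to a countable $QS$-algebra on the image and takes the union. The paper does this closure via $(2.1)$ where you use $(3.3)$, and your Urysohn functions $g_{V,U}\circ p_n$ play the role of the $C_n$.

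One line should be added for the third bullet. The real-valued members of $E(\overline X_\Psi)$ form the required $QS$-algebra. This holds because admissibility makes $\Psi$ closed under sums, products and rational multiples, and continuous extensions from the dense set $X_\Psi$ are unique.

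The step that does not work is your argument that $\dim\overline X_\Psi=k$ exactly. $\overline X_\Psi$ is a compactification of $X_\Psi=(\triangle\Psi)(X)$, not of $X$, and $X_\Psi$ can have smaller dimension than $X$. For example, if $\Psi_0$ consists of constants, your construction may legitimately stay at a single point. Moreover, the principle you invoke, that a compactification cannot have smaller dimension than the space, fails for general Tychonoff spaces. It holds for $\beta X$ and for subspaces of metrizable spaces.

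This does no harm: the paper proves only $\dim\overline X_\Psi\le k$, and that is all that is used later (see $(4.3)$). If you want equality, add to $\Psi_0$ the bounded coordinate functions of some $h\in\mathcal F_X$ for which, by $(3.1)$, no $h_0\succ h$ has $\dim h_0(X)\le k-1$. Then $\triangle\Psi\succ h$ forces $\dim X_\Psi\ge k$, and the subspace theorem in the metrizable space $\overline X_\Psi$ gives $\dim\overline X_\Psi\ge k$.
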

\begin{proof}
Let $\overline\Psi_0=\{\overline h\in C(\beta X,\overline{\mathbb R}):h\in\Psi_0\}$ and $Z_0=(\triangle\overline\Psi_0)(\beta X)$ and $X_0=(\triangle \Psi_0)(X)$. 
 Since $\dim\beta X\leq k$, by the Marde\v{s}i\'{c} factorization theorem \cite[Theorem 3.4.1]{en}, there is a metrizable compactum $Z_1$ and maps $\theta^1_0:Z_1\to Z_0$ and $\delta_1:\beta X\to Z_1$ 
 such that $\dim Z_1\leq k$ and $\theta^1_0\circ\delta_1=\triangle\overline\Psi_0$. 
 Now, fix a countable base $\mathcal B_1$ of $Z_1$, which is closed under finite intersections, and a countable $QS$-algebra $C_1$ on $Z_1$ satisfying condition $(2.3)$. 
 Let $\Omega_1$ be the set of all finite open covers of $Z_1$ consisting of elements of $\mathcal B_1$ and for every $\gamma\in\Omega_1$ take a partition of unity $\alpha_\gamma$ subordinated to $\gamma$. 
 Denote $E(Z_1)=\{\pi_h\circ\theta^1_0:h\in\overline\Psi_0\}\cup\{\alpha_\gamma:\gamma\in\Omega_1\}\cup C_1$ and $E_1=\{h|X_1:h\in E(Z_1)\}$, where $X_1=\delta_1(X)$. 
 Observe that $E_1$ is contained in $D(X_1)$ because all $\alpha_\gamma$ and $C_1$ consist of bounded functions and $\Psi_0\subset D(X)$.
  So,  there exists a $QS$-algebra
  $\Theta_1\subset D(X_1)$ on $X_1$ containing $E_1$.
 
We construct by induction a sequence metric compacta $\{Z_n\}_{n\geq 1}$ each having a countable base $\mathcal B_n$, continuous surjections $\delta_n:\beta X\to Z_n$ and $\theta^n_{n-1}:Z_n\to Z_{n-1}$, countable 
$QS$-algebras $\{\Theta_{n}\}_{n\geq 1}\subset D(X_{n})$ on $X_{n}=\delta_{n}(X)$, countable $QS$-algebras $C_n\subset C(Z_n)$ on $Z_n$, countable families 
$E(Z_n)\subset C(Z_n,\overline{\mathbb R})$ such that:
\begin{itemize}
\item[(1)] $\mathcal B_{n+1}$ contains all sets $(\theta^{n+1}_{n})^{-1}(U)$, $U\in\mathcal B_{n}$, and is closed under finite intersections;
\item[(2)] $C_{n+1}$ satisfies condition $(2.3)$ and $\{h\circ\theta^{n+1}_{n}:h\in C_{n}\}\subset C_{n+1}$;
\item[(3)] Every $Z_{n+1}$ is a metric compactum with $\dim Z_{n+1}\leq k$ such that for each $h\in\Theta_{n}$ the map $h\circ(\theta^{n+1}_{n}|X_{n+1})$ is extendable to a map $\widetilde h\in C(Z_{n+1},\overline{\mathbb R})$;
\item[(4)] $E(Z_{n+1})=\{\widetilde h:h\in\Theta_{n}\}\cup\{\alpha_\gamma:\gamma\in\Omega_{n+1}\}\cup C_{n+1}$, where $\Omega_{n+1}$ is the family of all finite open covers $\gamma$ of $Z_{n+1}$ with elements from $\mathcal B_{n+1}$ and $\alpha_\gamma$ is a partition of unity subordinated to $\gamma$;
\item[(5)] $\{h\circ(\theta^{n+1}_n|X_{n+1}):h\in\Theta_n\}\subset E_{n+1}=\{h|X_{n+1}:h\in E(Z_{n+1})\}\subset\Theta_{n+1}$;
\item[(6)] $\theta^{n+1}_{n}\circ\delta_{n+1}=\delta_{n}$.        
\end{itemize}
If the construction is performed for all $m\leq n$, let $E_n=\{h|X_n:\overline h\in E(Z_n)\}$ and $\Theta_{n}\subset D(X_n)$ be a countable $QS$-algebra on $X_n$ with $E_n\subset\Theta_{n}$. 
By Lemma 3.1, there exists a metrizable compactification $Z_{n+1}$ of $X_n$ and maps $\theta^{n+1}_n:Z_{n+1}\to Z_n$, $\delta_{n+1}:\beta X\to Z_{n+1}$ such that $\theta^{n+1}_{n}\circ\delta_{n+1}=\delta_{n}$,
$\dim Z_{n+1}\leq k$ and for each $h\in\Theta_{n}$ the map $h\circ(\theta^{n+1}_n|X_{n+1})$ is extendable to a map $\widetilde h:Z_{n+1}\to\overline{\mathbb R}$ (respectively, to a map  
$\widetilde h:Z_{n+1}\to\mathbb R$ in case $h$ is a bounded function), where $X_{n+1}=\delta_{n+1}(X)$.
Next, choose a base $\mathcal B_{n+1}$ of $Z_{n+1}$, a countable $QS$-algebra $C_{n+1}\subset C(Z_{n+1})$ and a countable family $E(Z_{n+1})$ satisfying condition $(1)-(5)$.

Let $Z$ be the limit space of the inverse sequence $S=\{Z_n,\theta^n_{n-1}:n\geq 2\}$. Since  $\dim Z_n\leq k$ for all $n$, $\dim Z\leq k$, see \cite[Theorem 3.4.11]{en}. Moreover, there is a map $\delta:\beta X\to Z$ such that $\theta_n\circ\delta=\delta_n$ for all $n$, where $\theta_n:Z\to Z_n$ are the projections in $S$. Let $\Psi_n=\{h\circ(\delta_n|X):h\in\Theta_n\}$ and $\Psi=\bigcup_{n\geq 1}\Psi_n$. Condition $(5)$ implies that $\{\Psi_n\}$ is increasing and, since $E_1$ contains $\{\pi_h\circ(\theta^1_0|X_1):h\in\overline\Psi_0\}$, $\Psi_0\subset\Psi_1$. Because $\{\pi_h:h\in\Psi_n\}=\Theta_n$, every $\Psi_n$ is admissible. Hence, $\Psi$ is also admissible and contains $\Psi_0$. Moreover, $\{\pi_h:h\in\Psi\}=\bigcup_{n\geq 1}\{h\circ(\theta_n|X_\Psi):h\in\Theta_n\}$ and, according to condition $(3)$, every $\pi_h$, $h\in\Psi$ is extendable to a map $\overline\pi_h\in C(Z,\overline{\mathbb R})$. Let $\overline\Psi=\{\overline\pi_h\circ\delta: h\in\Psi\}$. Because $C_n$ is a $QS$-algebra on $Z_n$, it separate the points and the closed subsets of $Z_n$. This fact implies that $Z$ is homeomorphic to $\overline X_\Psi=(\triangle\overline\Psi)(\beta X)$, so $X_\Psi=\delta(X)$.
  
So, we need to show that $E(Z)=\{\overline\pi_h:h\in\Psi\}$ contains a $QS$-algebra on $Z$ satisfying condition $(2.3)$ and $Z$ has a countable base $\mathcal B$ such that for every open cover $\gamma$ of $Z$ consisting of sets from $\mathcal B$ there is a partition of unity $\alpha_\gamma$ subordinated to $\gamma$ with $\alpha_\gamma\in E(Z)$. To this end let 
$C_n'=\{h\circ\theta_n:h\in C_n\}$, $n\geq 1$. Condition $(2)$ implies
the sequence $\{C'_n\}$ is increasing and let $C=\bigcup_n C'_n$.  
\begin{claim}
$C=\bigcup_n C'_n$ is a $QS$-algebra on $Z$ satisfying condition $(2.3)$.
\end{claim}
Since $\{C'_n\}$ is increasing, $C$ is closed under multiplications, additions and multiplications by rational numbers. So, to show it is a $QS$-algebra, we 
need to prove that for every point $z\in Z$ and its neighborhood $U\subset Z$ there is $h\in C$ such that $h(z)=1$ and $h(Z\backslash U)=0$. This will be done if we show $C$ satisfies condition $(2.3)$. To this end, take a compact set $K\subset Z$ and an open set $W\subset Z$ containing $K$. Then there exist $n$, a compact set $K_n\subset Z_n$ and open set $W_n\subset Z_n$ containing $K_n$ such that $K\subset\theta_n^{-1}(K_n)\subset\theta_n^{-1}(W_n)\subset W$. Since $C_n$ satisfies condition $(2.3)$, there is $h\in C_n$
with $h|K_n=1$, $h|(Z_n\backslash W_n)=0$ and $h(z)\in [0,1]$ for all $z\in Z_n$. Then 
$h'=\theta_n\circ h\in C$ is as required because $h':Z\to [0,1]$, $h'|K=1$ and $h'|(X\backslash W)=0$.  $\diamondsuit$

Recall that all finite intersections $U=\bigcap_{n=1}^m\theta_i^{-1}(U_n)$ with $U_n\in\mathcal B_n$ form a base $\mathcal B$ for $Z$. Because of the choice of all $\mathcal B_n$, see condition $(1)$, $\mathcal B$ consists of all sets of the form $U=\theta_n^{-1}(U_n)$ with $U_n\in\mathcal B_n$, $n\in\mathbb N$. 
\begin{claim}
For any finite open cover $\gamma$ of $Z$ consisting of sets from $\mathcal B$, the set $E(Z)$ contains a partition of unity subordinated to $\gamma$. 
\end{claim}
Indeed, for any such a cover $\gamma=\{U_1,..,U_m\}$ there is $n$ and a cover $\gamma_n=\{U_1^n,..,U_m^n\}\in\Omega_n$ such that $U_i=\theta_n^{-1}(U_i^n)$. So, there is a partition of unity 
$\alpha_{\gamma_n}=\{h_i^n:i=1,..,m\}$ subordinated to $\gamma_n$ with $\alpha_{\gamma_n}\subset E(Z_n)$.  
Then $\{h^n_i\circ\theta_n:i=1,...,m\}$ is a partition of unity subordinated to $\gamma$ and it is contained in $E(Z)$.
\end{proof}

\begin{lem}\cite[Lemma 4.2]{ev}
For every countable set $\overline\Phi'\subset C(\beta Y,\overline{\mathbb R})$ there is a countable set $\overline\Phi\subset C(\beta Y,\overline{\mathbb R})$ containing $\overline\Phi'$ such that 
the set of real-valued elements of $E_{\overline\Phi}=\{\pi_g: g\in\overline\Phi\}$ is dense in
$C_p((\triangle\overline\Phi)(\beta Y))$.
\end{lem}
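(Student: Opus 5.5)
We have to prove Lemma 3.3, cited from \cite[Lemma 4.2]{ev}: for every countable $\overline\Phi'\subset C(\beta Y,\overline{\mathbb R})$ there is a countable $\overline\Phi\supset\overline\Phi'$ such that the real-valued elements of $E_{\overline\Phi}=\{\pi_g:g\in\overline\Phi\}$ are dense in $C_p((\triangle\overline\Phi)(\beta Y))$. The plan is an inductive closing-off argument, producing an increasing sequence $\overline\Phi_0=\overline\Phi'\subset\overline\Phi_1\subset\cdots$ of countable sets and taking $\overline\Phi=\bigcup_n\overline\Phi_n$.

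\textbf{The inductive step.} Given $\overline\Phi_n$, the space $Z_n=(\triangle\overline\Phi_n)(\beta Y)$ is a metrizable compactum, being a compact subset of the countable product $\overline{\mathbb R}^{\overline\Phi_n}$. Let $q_n:\beta Y\to Z_n$ be the diagonal map and $p^{n+1}_n:Z_{n+1}\to Z_n$ the natural projections. Since $Z_n$ is compact metrizable, $C(Z_n)$ is separable in the sup-norm, so we fix a countable sup-dense set $\mathcal D_n\subset C(Z_n)$ containing the rational constants. We then put
\[
\overline\Phi_{n+1}=\overline\Phi_n\cup\{h\circ q_n:h\in\mathcal D_n\}.
\]
Every new function is real-valued, and $\overline\Phi_{n+1}$ is still countable.

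\textbf{Identifying the limit.} Set $Z=(\triangle\overline\Phi)(\beta Y)$. Because the sets $\overline\Phi_n$ increase to $\overline\Phi$, $Z$ is naturally the inverse limit of $\{Z_n,p^{n+1}_n\}$, with projections $p_n:Z\to Z_n$ satisfying $p_n\circ(\triangle\overline\Phi)=q_n$. Moreover, for $g=h\circ q_n$ with $h\in\mathcal D_n$, the coordinate projection satisfies $\pi_g=h\circ p_n$. Hence the real-valued part of $E_{\overline\Phi}$ contains $\bigcup_n\{h\circ p_n:h\in\mathcal D_n\}$, and this union is sup-dense in $\bigcup_n\{h\circ p_n:h\in C(Z_n)\}$.

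\textbf{Density.} The family $\bigcup_n p_n^*C(Z_n)$ is a subalgebra of $C(Z)$: it is increasing in $n$ and each $p_n^*C(Z_n)$ is an algebra. It contains the constants and separates the points of the inverse limit $Z$. By Stone--Weierstrass it is therefore uniformly dense in $C(Z)$, and so $E_{\overline\Phi}\cap C(Z)$ is uniformly dense in $C(Z)$. Uniform density implies density in the pointwise topology, which gives the lemma.

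\textbf{Main obstacle.} The delicate point is the identification in the second step: one must check that $\pi_g$ computed on $Z$ coincides with $h\circ p_n$, and that $Z$ is really the inverse limit. Both reduce to the fact that the coordinates of $\triangle\overline\Phi$ restricted to $\overline\Phi_n$ give exactly $q_n$, and this is immediate from the construction.
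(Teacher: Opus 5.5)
Your proof is correct. The paper does not prove this lemma; it quotes it from \cite{ev}. So there is no internal proof to match, and the closest argument in the paper is Claim 7 in the proof of Lemma 3.4.

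\textbf{Comparison with Claim 7.} There, density on an inverse limit of the $\overline Y_{2n-1}$ is checked directly in the pointwise topology. One fixes finitely many points and descends to a finite stage where their images stay distinct and basic neighbourhoods control $g_0$. Density at that stage then finishes the argument. You instead get sup-norm density, via Stone--Weierstrass applied to the increasing union of the algebras $p_n^*C(Z_n)$, plus the sup-density of each $\mathcal D_n$. This is cleaner and gives a stronger conclusion. It also has a side benefit: every function you add is real-valued on all of $\beta Y$, so its restriction lies in $C^*(Y)\subset D(Y)$. That is exactly what is needed when Lemma 3.4 later restricts the enlarged family to $Y$.

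\textbf{A simplification.} Your induction and the Stone--Weierstrass step can be dropped. Every added function $h\circ q_0$ factors through $q_0$. Hence for $\overline\Phi=\overline\Phi'\cup\{h\circ q_0:h\in\mathcal D_0\}$, the projection $p_0:(\triangle\overline\Phi)(\beta Y)\to Z_0$ is injective. Being a continuous bijection of compacta, it is a homeomorphism. The identity $\pi_{h\circ q_0}=h\circ p_0$ then transports the sup-dense set $\mathcal D_0\subset C(Z_0)$ onto a sup-dense subset of $C((\triangle\overline\Phi)(\beta Y))$. The same observation shows that all your bonding maps $p^{n+1}_n$ are homeomorphisms, so the later stages of your induction add nothing.
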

\begin{lem}
Let $Y$ be a space and $D(Y)\subset C(Y)$. Denote by $D(\overline Y)$ the set of all extensions $\overline g\in C(\beta Y,\overline{\mathbb R})$, $g\in D(Y)$.
Then for every countable 
set $\overline\Phi_0\subset D(\overline Y)$ there is  a countable set $\overline\Phi\subset D(\overline Y)$ containing $\overline\Phi_0$ such that all real-valued elements of $E_{\overline\Phi}=\{\pi_{\overline g}: \overline g\in\overline\Phi\}$ is dense in
$C_p((\triangle\overline\Phi)(\beta Y))$ and the set $\Phi=\{\overline g|Y:\overline g\in\overline\Phi\}$ is admissible. 
\end{lem}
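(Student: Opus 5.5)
The plan is to build $\overline\Phi$ as the union of an increasing sequence of countable sets, alternating two closure operations. The first is Lemma 3.3, which supplies density of the real-valued coordinate projections. The second is fact (3.3), which supplies admissibility of the restrictions to $Y$. Then (3.4) carries admissibility to the union, and a separate argument carries density to the limit.

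\emph{Construction.} Start with $\overline\Phi_0$.
\begin{enumerate}
\item Given a countable $\overline\Phi_{2n}\subset D(\overline Y)$, apply Lemma 3.3 to get a countable $\overline\Phi'_{2n+1}\supset\overline\Phi_{2n}$ in $C(\beta Y,\overline{\mathbb R})$ whose real-valued projections are dense in $C_p((\triangle\overline\Phi'_{2n+1})(\beta Y))$.
\item Lemma 3.3 may produce functions outside $D(\overline Y)$. Each such function is an extension of some element of $C(Y)$, but when $D(Y)=C^*(Y)$ it may be unbounded. I would fix this by inspecting the proof of \cite[Lemma 4.2]{ev}. The added functions there are obtained from given ones by continuous real operations, so they can be replaced by bounded compositions (e.g.\ with $\arctan$) while keeping density. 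Call the result $\overline\Phi_{2n+1}\subset D(\overline Y)$.
\item Apply (3.3) to $\{\overline g|Y:\overline g\in\overline\Phi_{2n+1}\}$ to get a countable admissible $\Phi_{2n+2}\subset D(Y)$ containing it. Let $\overline\Phi_{2n+2}$ be the set of extensions of its elements.
\end{enumerate}
Set $\overline\Phi=\bigcup_n\overline\Phi_n$. Then $\Phi=\bigcup_n\Phi_{2n}$ is an increasing union of admissible sets, so it is admissible by (3.4).

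\emph{Density.} Let $Z=(\triangle\overline\Phi)(\beta Y)$; it is the inverse limit of the spaces $Z_n=(\triangle\overline\Phi_{2n+1})(\beta Y)$. Basic open sets of $C_p(Z)$ are determined by finitely many points $z_1,\dots,z_r\in Z$, target values, and $\varepsilon$. I would choose $n$ so large that the images of the $z_i$ in $Z_n$ are distinct. Then the target data are the values of a continuous function on $Z_n$ at finitely many points. By density at stage $n$, there is a real-valued projection $\pi_g$, $g\in\overline\Phi_{2n+1}$, approximating it on $Z_n$. Its pullback to $Z$ is exactly $\pi_g$ on $Z$, so it approximates at the $z_i$.

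\emph{Main obstacle.} The hard part is keeping every stage inside $D(\overline Y)$, both in the Lemma 3.3 step (item 2) and in the (3.3) step, which for bounded inputs stays bounded. It must be done without losing density, and this is where I expect the real work.
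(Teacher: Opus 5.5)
Your overall scheme is the paper's. You alternate a density step (Lemma 3.3) with an admissibility step, take the union, and get admissibility from (3.4). You get density in the limit by pushing finitely many points of $Z$ down to a stage where they stay distinct, which is the paper's Claim 7. Citing (3.3) for the admissibility step is legitimate, since (3.3) as quoted keeps bounded inputs bounded. The paper instead reproves this in Claim 6, alternating algebraic closure with adjoining bounded Urysohn-type functions.

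The gap is item 2, which you leave open. Lemma 3.3 only guarantees functions in $C(\beta Y,\overline{\mathbb R})$. The new ones may be unbounded, and may even be infinite at points of $Y$, so your assertion that each extends an element of $C(Y)$ is also unjustified. Your remedy rests on a guess about the construction behind Lemma 3.3, and as stated it need not preserve density. The functions $\arctan\circ\overline g$ are bounded by $\pi/2$, so at the very least their rational multiples would be needed.

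The clean repair bypasses Lemma 3.3 entirely. The space $Z'=(\triangle\overline\Phi_{2n})(\beta Y)$ is a metrizable compactum. Take a countable sup-norm dense set $\{h_k\}\subset C(Z')$ and put $\overline\Phi_{2n+1}=\overline\Phi_{2n}\cup\{h_k\circ\triangle\overline\Phi_{2n}:k\in\mathbb N\}$.
\begin{itemize}
\item The new functions are bounded and real-valued on $\beta Y$, hence lie in $D(\overline Y)$ for either choice of $D$.
\item They factor through $\triangle\overline\Phi_{2n}$, so the projection $p:(\triangle\overline\Phi_{2n+1})(\beta Y)\to Z'$ is a continuous bijection of compacta, hence a homeomorphism.
\item Then $\pi_{h_k\circ\triangle\overline\Phi_{2n}}=h_k\circ p$, and these are dense in $C_p((\triangle\overline\Phi_{2n+1})(\beta Y))$.
\end{itemize}
With this replacement the rest of your argument goes through. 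The paper's own proof glosses over the same point: it simply asserts $\Phi_1\subset D(Y)$ after applying Lemma 3.3. So your worry was well placed.
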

\begin{proof} Observe that $D(\overline Y)=C(\beta Y)$ if $D(Y)=C^*(Y)$.
\begin{claim}
For every countable $\Psi'\subset D(Y)$ there is a countable admissible set $\Psi\subset D(Y)$ containing $\Psi'$.
\end{claim}
We construct by induction an increasing sequence of countable sets $\Psi_n\subset D(Y)$ each containing $\Psi'$ and maps 
$\delta^{n+1}_n:(\triangle\Psi_{n+1})(Y)\to(\triangle\Psi_{n})(Y)$ such that: 
\begin{itemize}
\item Every space $Y_n=(\triangle\Psi_{n})(Y)$ has a countable base $\mathcal B_n$ with $(\delta^{n+1}_n)^{-1}(\mathcal B_n)\subset\mathcal B_{n+1}$;
\item $\Psi_{2n-1}$ is closed under addition, multiplication and multiplication by rational numbers;
\item For every pair $U,V\in\mathcal B_{2n-1}$ with $cl_{Y_n}(V)\subset U$ there exists a  fixed $f_{U,V}\in C^*(Y_{2n-1})$ such that 
$f_{U,V}(V)=1$, $f_{U,V}(Y_{2n-1}\backslash U)=0$ and $f_{U,V}\circ(\triangle\Psi_{2n-1})\in D(Y)$; 
\item $\Psi_{2n}=\Psi_{2n-1}\cup\{f_{U,V}\circ\triangle\Psi_{2n-1}:U,V\in\mathcal B_{2n-1}\}$.
\end{itemize} 
Since $D(Y)$ is an algebra, there is a countable $\Psi_1\subset D(Y)$ containing $\Psi'$ such that $\Psi_1$ is closed under addition, multiplication and multiplication by rational numbers. Then 
$\overline\Psi_1=\{\overline g:g\in\Psi_1\}\subset D(\overline Y)$ and $Y_1=(\triangle\Psi_{1})(Y)$ is a dense subset of 
$\overline Y_1=(\triangle\overline\Psi_{1})(\beta Y)$. We fix a countable base $\widetilde{\mathcal B}_1$ of $\overline Y_1$ and for every $\widetilde U,\widetilde V\in\widetilde{\mathcal B}_1$ with $cl(\widetilde V)\subset\widetilde U$ choose  
a function $\overline f_{\widetilde U,\widetilde V}\in C(\overline Y_1)$ such that  
$\overline f_{\widetilde U,\widetilde V}(\widetilde V)=1$ and $\overline f_{\widetilde U,\widetilde V}(\overline Y_1\backslash \widetilde U)=0$.
Denote $\mathcal B_1=\{\widetilde U\cap Y_1:\widetilde U\in\widetilde{\mathcal B}_1\}$ and $f_{U,V}=\overline f_{\widetilde U,\widetilde V}|Y_1$.

Observe that $\overline f_{U,V}\circ(\triangle\overline\Psi_{1})\in C(\beta Y)$ and $\big(\overline f_{U,V}\circ(\triangle\overline\Psi_{1})\big)|Y=f_{U,V}\circ(\triangle\Psi_{1}(Y))$. Then
each $f_{U,V}\circ(\triangle\Psi_{1}(Y))$ belongs to $D(Y)$. Next, define $\overline\Psi_2=\overline\Psi_1\cup\{\overline f_{\widetilde U,\widetilde V}\circ(\triangle\overline\Psi_{1}):\widetilde U,\widetilde V\in\widetilde{\mathcal B}_1\}$. Because $\overline \Psi_1\subset\overline\Psi_2$ there is natural map $\overline\eta^2_1:\overline Y_2=(\triangle\overline\Psi_2)(\beta Y)\to \overline Y_1$. Choose a base $\widetilde{\mathcal B}_2$ for $\overline Y_2$ which is closed under finite intersections and containing all 
$(\overline\eta^2_1)^{-1}(\widetilde U)$, $\widetilde U\in\widetilde{\mathcal B}_1$. Let $\Psi_2=\overline\Psi_2|Y_2$, $\mathcal B_2=\{\widetilde U\cap Y_2:\widetilde U\in\widetilde{\mathcal B}_2\}$ and $\delta^2_1=\eta^2_1|Y_2$.
Now, it is clear how to complete the construction of the sets $\Psi_n$. 

Let show that the countable set 
$\Psi=\bigcup_{n=1}^\infty\Psi_n$ is admissible. Since $\{\Psi_n\}$ is increasing and each $\Psi_{2n-1}$ is closed under addition, multiplication and 
multiplication by rationals, so is $\Psi$. That implies the set $E_\Psi=\{\pi_g:g\in\Psi\}$ is also closed under addition, multiplication and multiplication by rationals. Hence, we need to show that for every $y\in Y_\Psi=(\triangle\Psi)(Y)$ and its neighborhood $U\subset Y_\Psi$ there exists $g\in\Psi$ with
$\pi_g(y)=1$ and $\pi_g(Y_\Psi\backslash U)=0$. To this end, observe that $Y_{\overline\Psi}=(\triangle\overline\Psi)(\beta Y)$ is the limit space of the inverse sequence
$S_\Psi=\{\overline Y_n;\eta^{n+1}_n)\}$ and, according to our construction, the base of $Y_{\overline\Psi}$ consists of all sets of the form $\eta_n^{-1}(\widetilde U_n)$ with $\widetilde U_n\in\widetilde{\mathcal B}_n$, $n\geq 1$, where 
$\eta_n:Y_{\overline\Psi}\to\overline Y_n$ are the projections in $S_\Psi$. Therefore, we can assume that $U=\widetilde U\cap Y_\Psi$ and $\widetilde U=\eta_n^{-1}(\widetilde U_n)$ with $\widetilde U_n\in\widetilde{\mathcal B}_n$ for some $n$. Passing to a bigger integer, we can suppose that $n=2k-1$. Choose a neighborhood $V_n\in\mathcal B_n$ of $y_n=\eta_n(y)$ with $cl_{Y_n}(V_n)\subset U_n=\widetilde U_n\cap Y_n$. Then the function $f_{U_n,V_n}$ satisfies the equalities $f_{U_n,V_n}(V_n)=1$ and $f_{U_n,V_n}(Y_n\backslash U_n)=0$. Consequently, $g=f_{U_n,V_n}\circ(\triangle\Psi_{n})\in\Psi_{n+1}\subset\Psi$ and $\pi_g=f_{U_n,V_n}\circ\eta_n$. 
Hence, $\pi_g(y)=1$ and $\pi_g(Y_\Psi\backslash U)=0$. This complete the proof of Claim 6. $\diamondsuit$

Next step of our proof is to construct an increasing sequence of countable sets $\overline\Phi_n\subset C(\beta Y,\overline{\mathbb R})$ containing $\overline\Phi_0$ such that:
\begin{itemize}
\item The real-valued elements of each $E_{2n-1}=\{\pi_{\overline g}: \overline g\in\overline\Phi_{2n-1}\}$ is dense in
$C_p((\triangle\overline\Phi_{2n-1})(\beta Y))$;
\item Every $\Phi_{2n}=\{\overline g|Y:\overline g\in\overline\Phi_{2n}\}$ is an admissible set.
\end{itemize}
Let show how to construct the sets $\overline\Phi_{n}$. We choose a set $\overline\Phi_{1}$ according to Lemma 3.3. Then 
$\Phi_{1}=\{\overline g|Y:\overline g\in\overline\Phi_{1}\}$ is a countable subset of $D(Y)$. Hence, by Claim 6, there is a countable admissible set
$\Phi_2\subset D(Y)$ containing $\Phi_{1}$. Let $\overline\Phi_2=\{\overline g:g\in\Phi_2\}\subset C(\beta Y,\overline{\mathbb R})$ be the extension of $\Phi_2$. Next, apply Lemma 3.3 to find a countable set $\overline\Phi_3\subset C(\beta Y,\overline{\mathbb R})$ containing $\overline\Phi_2$ such that
the real-valued elements of $E_{3}=\{\pi_{\overline g}: \overline g\in\overline\Phi_{3}\}$ is dense in
$C_p((\triangle\overline\Phi_{3})(\beta Y))$ and denote $\Phi_3=\{\overline g|Y:\overline g\in\overline\Phi_{3}\}$. In this way, applying either Lemma 3.3 or Claim 6, we construct the sequence $\{\overline\Phi_n\}$. Let $\overline\Phi=\bigcup_{n=1}^\infty\overline\Phi_n$ and $\Phi=\bigcup_{n=1}^\infty\Phi_n$. Since $\Phi$ is the union of the increasing sequence $\{\Phi_{2n}\}$ consisting of admissible sets, $\Phi$ is also admissible, see $(3.4)$. 

So, it remains to show that the following claim:
\begin{claim}
The real-valued elements of $E_{\overline\Phi}=\{\pi_{\overline g}: \overline g\in\overline\Phi\}$ is dense in the space $C_p((\triangle\overline\Phi)(\beta Y))$. 
\end{claim}
To this end let $\overline Y_n=(\triangle\overline\Phi_n)(\beta Y)$ and $\overline Y_0=(\triangle\overline\Phi)(\beta Y)$. For every $n$ there are natural maps $\eta^{2n+1}_{2n-1}:\overline Y_{2n+1}\to \overline Y_{2n-1}$ since $\overline\Phi_{2n-1}\subset\overline\Phi_{2n+1}$. Then 
$\overline Y_0$ is the limit space of the inverse sequence $S_\Phi=\{\overline Y_{2n+1};\eta^{2n+1}_{2n-1}\}$.
Every projection $\eta_{2n-1}:\overline Y_0\to\overline Y_{2n-1}$ induces a continuous map 
$\eta_{2n-1}^*:C_p(\overline Y_{n-1})\to C_p(\overline Y_0)$ defined by $\eta_{2n-1}^*(h)=h\circ\eta_{2n-1}$. 
Because 
$E_{\overline\Phi}=\bigcup_n\eta_{2n-1}^*(E_{2n-1})$ (recall that $\overline\Phi=\bigcup_{n=1}^\infty\overline\Phi_{2n-1}$), it suffices to show that  
the set of real-valued functions from $\bigcup_{n=1}^\infty\eta_{2n-1}^*(E_{2n-1})$ is dense in $C_p(\overline Y_0)$. So, let 
$O=\{g\in C_p(\overline Y_0):|g(\overline y_i)-g_0(\overline y_i)|<\varepsilon_i, i=1,..,k\}$ be a neighborhood of some $g_0\in C_p(\overline Y_0)$, where 
all $\overline y_i$ are different points from $\overline Y_0$. Since the base of $\overline Y_0$ consists of all sets of the form $\eta_{2n-1}^{-1}(U_{2n-1})$ where $U_{2n-1}$ belongs to the base of $\overline Y_{2n-1}$, there is $m$ and different points $y_i\in\overline Y_{2m-1}$ such that $\eta_{2m-1}(\overline y_i)=y_i$ and $\eta_{2m-1}^{-1}(U_i)\subset g_0^{-1}(V_i)$, where $U_i$ are neighborhoods of $y_i$ in $\overline Y_{2m-1}$ and $V_i$ are the open intervals $(g_0(\overline y_i)-\varepsilon_i,g_0(\overline y_i)+\varepsilon_i)$. Then the set $W=\{h\in C_p(\overline Y_{2m-1}):h(y_i)\in V_i, i=1,2,..,k\}$  is open in $C_p(\overline Y_{2m-1})$. Since the set of real-valued functions from $E_{2n-1}$ is dense in $C_p(\overline Y_{2n-1})$, there is a real-valued function
$h_0\in E_{2m-1}$ with $h_0\in W$. Then $h_0\circ\eta_{2m-1}$ is a real-valued function from $\bigcup_{n=1}^\infty\eta_{2n-1}^*(E_{2n-1})\cap O$. So, the set of all real-valued functions from $E_{\overline\Phi}$ is dense in $C_p(\overline Y_0)$.
\end{proof}

\section{Proofs}
\textit{ Proof of Theorem $1.1$.}
Let $X$ and $Y$ be Tychonoff spaces with 
$\dim X=d$  and $T:D_p(X)\to D_p(Y)$ be a continuous linear
surjection with $|\supp(y)|\leq m$ for all $y\in Y$. 
The support of every $y\in\overline Y$ consists of all $x\in\beta X$ such that for every neighborhood $U$ of $x$ in $\beta X$ there is $f\in D(X)$ with $f(X\backslash U)=0$ and $\overline{T(f)}(y)\neq 0$, where $\overline{T(f)}:\beta Y\to\overline{\mathbb R}$ denotes the extension of $T(f)$ over $\beta Y$. Therefore, we can apply Proposition 2.1 with $\overline X=\beta X$, $LE(X)=E(X)=D(X)$, $\overline Y=\beta Y$, $LE(Y)=E(Y)=D(Y)$ and $\varphi=T$. The supports have all properties established in Proposition 2.1, in particular $\supp(y)\neq\varnothing$ for all $y\in\beta Y$ and the support function 
$\beta Y\rightsquigarrow\beta X$ is lower semi-continuous.  This implies that 
$|\supp(y)|\leq m$ for all $y\in\beta Y$ (recall that $|\supp(y)|\leq m$, $y\in Y$).

To show that $\dim Y\leq m\cdot d$, it suffices to prove that for every $h\in\mathcal F_{Y}$ there exists  $h_0\in\mathcal F_{Y}$ with $h_0\succ h$ and $\dim h_0(Y)\leq m\cdot d$, see condition $(3.1)$. We are going to find such $h_0$ and apply Proposition 2.3 to show that  $\dim h_0(Y)\leq m\cdot d$.
To this end, fix $h\in\mathcal F_{Y}$ and let $\overline h:\beta Y\to\overline{h(Y)}$ such that $\overline{h(Y)}$ is a metric compactification of $h(Y)$. 
We will construct by induction two increasing sequences of countable sets 
$\{\overline\Psi_n\}\subset C(\beta X,\overline{\mathbb R})$, $\{\overline\Phi_n\}\subset C(\beta Y,\overline{\mathbb R})$, metrizable compactifications $\overline X_n=(\triangle\overline\Psi_n)(\beta X)$ and $\overline Y_n=(\triangle\overline\Phi_n)(\beta Y)$ of
the spaces $X_n=(\triangle\Psi_n)(X)$ and $Y_n=(\triangle\Phi_n)(Y)$, where $\Psi_n=\overline\Psi_n|X$ and $\Phi_n=\overline\Phi_n|Y$, 
countable bases $\mathcal B(\overline X_n)$ and $\mathcal B(\overline Y_n)$ for $\overline X_n$ and 
$\overline Y_n$ and continuous surjections $\theta^{n+1}_n:\overline X_{n+1}\to\overline X_n$, $\delta^{n+1}_n:\overline Y_{n+1}\to\overline Y_n$
satisfying the following conditions (everywhere below, if $f\in C(X)$ then $\overline f:\beta X\to\overline{\mathbb R}$ denotes its extension):
\begin{itemize}
\item[(4.0)] $\Psi_n\subset D(X)$ and $\Phi_n\subset D(Y)$ for all $n$;
\item[(4.1)] $\triangle\overline\Phi_1\succ\overline h$, $\Phi_n\subset\{T(f):f\in\Psi_{n}\}\subset\Phi_{n+1}$ and $\Psi_n\subset\Psi_{n+1}$;
\item[(4.2)] For any $\overline f\in\overline\Psi_n$ the map $\pi_{\overline f}:\overline X_n\to\overline{\mathbb R}$ extends the map $\pi_f$, where $f=\overline f|X$;
\item[(4.3)] Every $\Psi_n$ is admissible, $\dim\overline X_n\leq d$, each $\mathcal B(\overline X_n)$ is closed under finite intersections and $(\theta_n^{n+1})^{-1}(U)\in\mathcal B(\overline X_{n+1})$, $U\in\mathcal B(\overline X_n)$;
\item[(4.4)] $E(\overline X_n)=\{\pi_{\overline f}:\overline f\in\overline\Psi_n\}$ contains a countable $QS$-algebra $C_n\subset C(\overline X_n)$ on $\overline X_n$ satisfying condition $(2.3)$ with $\{h\circ\theta^{n+1}_n:h\in C_{n}\}\subset C_{n+1}$;
\item[(4.5)] For every finite open cover $\gamma$ of $\overline X_n$, consisting of sets from $\mathcal B(\overline X_n)$, there exists a partition of unity $\alpha_\gamma$ subordinated to $\gamma$ with $\alpha_\gamma\subset E(\overline X_n)$;
\item[(4.6)] For any $\overline g\in\overline\Phi_n$ the map $\pi_{\overline g}:\overline Y_n\to\overline{\mathbb R}$ extends the map $\pi_g$, where $g=\overline g|Y$;
\item[(4.7)]  $\mathcal B(\overline Y_n)$ contains all sets $(\delta^{n}_{n-1})^{-1}(U)$, $U\in\mathcal B(\overline Y_{n-1})$, and is closed under finite intersections;
\item[(4.8)] Every $\Phi_n$ is admissible and the set of real-valued functions from $E(\overline Y_n)=\{\pi_{\overline g}: \overline g\in\overline\Phi_n\}$ is dense in $C_p(\overline Y_n)$.
\end{itemize}
Note that if $D(X)=C^*(X)$, then $\{\overline\Psi_n\}\subset C(\beta X)$ for all $n$. Similarly, $\{\overline\Phi_n\}\subset C(\overline Y)$ provided $D(Y)\subset C^*(Y)$. 

Since $\overline h(\beta Y)$ is a separable metrizable space, there is a countable set $\overline \Phi_1'\subset C(\beta Y)$ with $\overline h=\triangle\overline\Phi_1'$. 
By Lemma 3.4, there is a countable set $\overline\Phi_1\subset C(\beta Y,\overline{\mathbb R})$ containing $\overline{\Phi_1'}$ such that $\Phi_1=\{\overline g|Y:\overline g\in\overline\Phi_1\}$ is admissible and
$E(\overline Y_1)=\{\pi_{\overline g}:\overline g\in\overline\Phi_1\}$ contains a dense subset of $C_p(\overline Y_1)$, where 
$\overline Y_1=(\triangle\overline\Phi_1)(\beta Y)$. Let $Y_1=(\triangle\Phi_1)(Y)$ and  
 choose a countable set $\Psi_1'\subset D(X)$ with $T(\Psi_1')=\Phi_1$. Next, apply Lemma 3.2 to find  a countable admissible set $\Psi_1\subset D(X)$ containing 
$\Psi_1'$ and a metrizable compactification $\overline X_1$ of $X_1=\triangle\Psi_1(X)$ satisfying conditions $(4.2)-(4.5)$. Let $\overline\Psi_1=\{\overline f:f\in\Psi_1\}$.

Suppose the construction is done for all $i\leq n$. Let $\Phi_{n+1}'\subset D(Y)$ be a countable set containing $T(\Psi_n)$ and denote 
 $\overline{\Phi'}_{n+1}=\{\overline g:g\in\Phi'_{n+1}\}\subset C(\beta Y,\overline{\mathbb R})$. By Lemma 3.4, there is a countable set $\overline\Phi_{n+1}\subset C(\beta Y,\overline{\mathbb R})$ containing $\overline{\Phi'}_{n+1}$ such that the set
$\Phi_{n+1}=\{\overline g|Y:\overline g\in\overline\Phi_{n+1}\}$ is admissible and 
 $E(\overline Y_{n+1})=\{\pi_{\overline g}:\overline g\in\overline\Phi_{n+1}\}$ contains a dense subset of $C_p(\overline Y_{n+1})$, where 
$\overline Y_{n+1}=(\triangle\overline\Phi_{n+1})(\beta Y)$. Let  $Y_{n+1}=(\triangle\Phi_{n+1})(Y)$.
Note that $\Phi_n\subset\Phi_{n+1}$ because $\Phi_n\subset T(\Psi_n)$.
Next, choose a countable set $\Psi_{n+1}'\subset D(X)$ with $T(\Psi_{n+1}')=\Phi_{n+1}$ and apply Lemma 3.2 to find  a countable admissible set $\Psi_{n+1}\subset D(X)$ containing 
$\Psi_{n+1}'\cup\Psi_n$ and a metrizable compactification $\overline X_{n+1}$ of $X_{n+1}=(\triangle\Psi_{n+1})(X)$ satisfying conditions $(4.1)-(4.5)$.
Because $\overline\Psi_n\subset\overline\Psi_{n+1}$, $\triangle\overline\Psi_{n+1}\succ\triangle\overline\Psi_{n}$. Hence, there exists a map  
$\theta^{n+1}_n:\overline X_{n+1}\to\overline X_n$ defined by $\theta^{n+1}_n=\triangle\overline\Psi_{n}((\triangle\overline\Psi_{n+1})^{-1}(x))$. 
This completes the induction. 

Let $\overline\Psi=\bigcup_n\overline\Psi_n$, $\overline\Phi=\bigcup_n\overline\Phi_n$,
$\overline X_0=(\triangle\overline\Psi)(\beta X)$, $X_0=(\triangle\Psi)(X)$,
$\overline Y_0=(\triangle\overline\Phi)(\beta Y)$, $Y_0=(\triangle\Phi)(Y)$ and $\overline h_0=\triangle\overline\Phi$, where $\Psi=\bigcup_n\Psi_n$ and $\Phi=\bigcup_n\Phi_n$. Since $\triangle\overline\Phi_1\succ\overline h$, $h_0\succ h$. We are going to show that $\dim Y_0\leq m\cdot d$. 

Clearly, $\Phi=\{T(f): f\in\Psi\}$.
Since $\overline X_0$ is the limit space of the inverse sequence $S_X=\{\overline X_n,\theta^{n+1}_n\}$ and $\dim\overline X_n\leq d$ for all $n$, by \cite[Theorem 3.4.11]{en}, $\dim\overline X_0\leq d$. Moreover, since $\Psi$ is the union of the increasing sequence $\{\Psi_n\}$ of admissible sets, it is also admissible. Hence,
$E(X_0)=\{\pi_f:f\in\Psi\}$ is a countable $QS$-algebra on $X_0$ such that every $\pi_f$ is extendable to a continuous map $\pi_{\overline f}:\overline X_0\to\overline{\mathbb R}$ with $\overline f\in\overline\Psi$. Let $E(\overline X_0)=\{\pi_{\overline f}:\overline f\in\overline\Psi\}$ and
$C=\bigcup_n\{h\circ\theta_n:h\in C_n\}\subset E(\overline X_0)$. The same arguments as in the proof of Claim 4 from Lemma 3.2 show that $C$ is a $QS$-algebra on $\overline X_0$ satisfying condition $(2.3)$. Let $\mathcal B(\overline X)$ be the base of $\overline X_0$ generated by the bases $\mathcal B(\overline X_n)$. The arguments from the proof of Claim 5 from Lemma 3.2 also provide that 
 for every finite open cover $\gamma$ of $\overline X_0$, consisting of open sets from $\mathcal B(\overline X)$ there exists a partition of unity $\alpha_\gamma$ subordinated to $\gamma$ with $\alpha_\gamma\subset E(\overline X_0)$. 

The inclusions $\overline\Phi_n\subset\overline\Phi_{n+1}$ imply that $\triangle\overline\Phi_{n+1}\succ\triangle\overline\Phi_{n}$. So, for every $n$ there is a map $\delta^{n+1}_n:\overline Y_{n+1}\to\overline Y_n$ defined by $\delta^{n+1}_n(y)=\triangle\Phi_{n}((\triangle\Phi_{n+1})^{-1}(y))$.
Then $\overline Y_0$ is the limit of the inverse sequence $S_Y=\{\overline Y_n,\delta^{n+1}_n\}$. 
Let $E(Y_0)=\{\pi_g:g\in\Phi\}$ and $E(\overline Y_0)=\{\pi_{\overline g}:\overline g\in\overline\Phi\}$. Because each $\Phi_n$ is admissible and the sequence $\{\Phi_n\}$ is increasing, $\Phi$ is also admissible. This means that $E(Y_0)$ is a $QS$-algebra on $Y_0$ such that each $\pi_g$, $g\in\Phi$, is extendable to a map $\pi_{\overline g}:\overline Y_0\to\overline{\mathbb R}$. Moreover, the arguments from the proof of Claim 7 from Lemma 3.4 show that 
the set of real-valued elements of $E_p(\overline Y_0)$ is dense in $C_p(\overline Y_0)$.

We define a map 
$\varphi_0:E_p(X_0)\to E_p(Y_0)$ by  $\varphi_0(\pi_f)=\pi_{T(f)}$. Since $T$ is continuous and linear, one can show that $\varphi_0$ is semi-linear and continuous. Moreover, because $T(\Psi)=\Phi$, $\varphi_0$ is surjective.
We are going to prove that $\varphi_0$ has a continuous extension over the linear hull $LE_p(X_0)$. For every $f\in LE(X_0)$ let $f^*=f\circ(\triangle\Psi)\in C(X)$. Evidently, if  
$f=\sum_{i=1}^q\lambda_i\cdot f_i\in LE(X_0)$, $f_i\in E(X_0)$, then $f^*=\sum_{i=1}^q\lambda_i\cdot f_i^*$ with $f_i^*\in\Psi$. So, we have another description of the map $\varphi_0$: 
$\varphi_0(f)=\pi_{T(f^*)}$, $f\in E(X_0)$.
\begin{claim}
Let $f=\sum_{i=1}^q\lambda_i\cdot f_i\in LE(X_0)$ with $f_i\in E(X_0)$ for all $i$. If $y^*\in Y$ and $h_0(y^*)=y$, then
$T(f^*)(y^*)=\sum_{i=1}^q\lambda_i\cdot T(f_i^*)(y^*)=\sum_{i=1}^q\lambda_i\cdot\varphi_0(f_i)(y)$. 
\end{claim}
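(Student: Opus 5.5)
The claim has two equalities, and I would prove them in order. The first comes from linearity of $T$. The second comes from unwinding what $\varphi_0$ and the projections $\pi_g$ mean.

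For the first equality, $f^*=f\circ(\triangle\Psi)$ and $f=\sum_{i=1}^q\lambda_i f_i$ pointwise on $X_0$. Composing with $\triangle\Psi$ gives $f^*=\sum_{i=1}^q\lambda_i f_i^*$ in $C(X)$. Each $f_i\in E(X_0)$ has the form $\pi_{g_i}$ with $g_i\in\Psi$. Since $\pi_{g_i}\circ\triangle\Psi=g_i$, we get $f_i^*=g_i\in\Psi\subset D(X)$, and hence $f^*\in D(X)$ because $D(X)$ is a linear space. Now $T$ is linear on $D(X)$, so $T(f^*)=\sum_i\lambda_i T(f_i^*)$ as functions on $Y$. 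Evaluating at $y^*$ gives the first equality.

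For the second equality, it suffices to check for each $i$ that $T(f_i^*)(y^*)=\varphi_0(f_i)(y)$. By definition $\varphi_0(f_i)=\pi_{T(f_i^*)}$, and $T(f_i^*)\in T(\Psi)=\Phi$. So $\pi_{T(f_i^*)}$ is the coordinate projection of $Y_0=(\triangle\Phi)(Y)\subset\prod_{g\in\Phi}\mathbb R_g$ onto the factor indexed by $g=T(f_i^*)$. Since $y=h_0(y^*)=(\triangle\Phi)(y^*)$, this gives
$$\pi_{T(f_i^*)}(y)=\pi_{T(f_i^*)}\big((\triangle\Phi)(y^*)\big)=T(f_i^*)(y^*).$$
Summing with the coefficients $\lambda_i$ finishes the proof.

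The only delicate point is that $\varphi_0$ must be well defined. A function in $E(X_0)$ could be $\pi_g=\pi_{g'}$ for different $g,g'\in\Psi$, but then $g=\pi_g\circ\triangle\Psi=\pi_{g'}\circ\triangle\Psi=g'$. So the index is recovered as $f_i^*$, and $\varphi_0(f_i)=\pi_{T(f_i^*)}$ does not depend on the representation. I also note that $\pi_g\circ\triangle\Phi=g$ holds for every $g\in\Phi$, which is exactly the identity used above. Beyond these identifications I expect no real obstacle; the claim is a bookkeeping step. Its purpose is to give independence of $\sum\lambda_i\varphi_0(f_i)$ from the representation of $f$, which will later justify the linear extension $\varphi$.
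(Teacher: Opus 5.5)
Your proposal is correct and follows the paper's argument: linearity of $T$ applied to $f^*=\sum_{i=1}^q\lambda_i f_i^*$, then $\varphi_0(f_i)(y)=\pi_{T(f_i^*)}\big((\triangle\Phi)(y^*)\big)=T(f_i^*)(y^*)$ because $f_i^*\in\Psi$ and $T(f_i^*)\in\Phi$. The paper compresses this into one line. Your remark that $\varphi_0$ is well defined, since $g=\pi_g\circ\triangle\Psi$, is a useful detail the paper leaves implicit.
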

It suffices to show that $T(f^*)(y^*)=\varphi_0(f)(y)$ for all $f\in E(X_0)$. And that is true because $f^*\in\Psi$, so $T(f^*)\in\Phi$ and $T(f^*)(y^*)=\varphi_0(f)(y)$. $\diamondsuit$

We define $\varphi:LE(X_0)\to LE(Y_0)$ by 
$\varphi(\sum_{i=1}^q\lambda_i\cdot f_i)=\sum_{i=1}^q\lambda_i\cdot\varphi_0(f_i)$, where $\lambda_i\in\mathbb R$ and $f_i\in E(X_0)$. The continuity of $\varphi$ with respect to the point-wise topology is equivalent of the continuity of all linear functionals $l_y:LE_p(X_0)\to \mathbb R$ defined by $l_y(f)=\varphi(f)(y)$, $y\in Y_0$.
So, fix $y_0\in Y_0$ and $f_0=\sum_{i=1}^q\lambda_i\cdot f_i\in LE(X_0)$ with $f_i\in E(X_0)$ such that $l_{y_0}(f_0)\in V$ for some open interval $V\subset\mathbb R$. Then $f_0^*=\sum_{i=1}^q\lambda_i\cdot f_i^*$ and, by Claim 8 we have
$$T(f_0^*)(y_0^*)=\sum_{i=1}^q\lambda_i\cdot T(f_i^*)(y_0^*)=\sum_{i=1}^q\lambda_i\cdot\varphi_0(f_i)(y_0)=l_{y_0}(f_0),$$
where $y_0^*\in Y$ with $h_0(y_0^*)=y_0$.
Since $T$ is continuous, so is the linear functional $\mu$ on $D_p(X)$, defined by $\mu(f)=T(f)(y_0^*)$. So, there is a neighborhood $W^*=\{g\in D(X):|g(x_j^*)-f_0^*(x_j^*)|<\eta_j, j=1,2,..,p\}$ of $f_0^*$ in $D_p(X)$ such that $T(g)(y_0^*)\in V$ for all $g\in W^*$. Observe that 
$$f_0^*(x_j^*)=\sum_{i=1}^q\lambda_i\cdot f_i^*(x_j^*)=\sum_{i=1}^q\lambda_i\cdot f_i(x_j)=f_0(x_j),$$
where $x_j=(\triangle\Psi)(x_j^*)$. So, $W=\{f\in LE(X_0):|f(x_j)-f_0(x_j)|<\eta_j,j=1,2,..,p\}$ is a neighborhood of $f_0$ in $LE_p(X_0)$. If $g\in W$ and 
$g=\sum_{s=1}^t\lambda_s\cdot g_s$ for some $g_s\in E(X_0)$, then $g^*=\sum_{s=1}^t\lambda_s\cdot g_s^*\in W^*$ which means that
$T(g^*)(y_0^*)\in V$. Finally, according to Claim 8, $T(g^*)(y_0^*)=\sum_{s=1}^p\lambda_s\cdot\varphi(g_s)(y_0)=l_{y_0}(g)\in V$.
Thus, every $l_y$ is continuous and so is the map $\varphi:LE_p(X_0)\to LE_p(Y_0)$.

According to Proposition 2.1, the support of all $y\in\overline Y_0$ with respect to the linear map $\varphi$ are non-empty. 
\begin{claim}
For every $y\in\overline Y_0$ we have $|\supp(y)|\leq m$. 
\end{claim}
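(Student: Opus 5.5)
By Proposition 2.1(c), the support map $\overline Y_0\rightsquigarrow\overline X_0$ (taken with respect to $\varphi$) is lower semi-continuous. Also, $Y_0$ is dense in $\overline Y_0$. So, exactly as at the start of the proof of Proposition 2.3 and in the first paragraph of this proof, it is enough to show $|\supp(y)|\leq m$ for every $y\in Y_0$. Indeed, suppose some $y\in\overline Y_0$ had $m+1$ distinct support points. Take pairwise disjoint neighborhoods $O_1,\dots,O_{m+1}$ of them. By lower semi-continuity there is a neighborhood $U$ of $y$ such that $\supp(z)$ meets every $O_i$ for each $z\in U$. Since $U\cap Y_0\neq\varnothing$, this gives a point of $Y_0$ with at least $m+1$ support points.

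Now fix $y\in Y_0$ and choose $y^*\in Y$ with $h_0(y^*)=y$. Let $\supp_T(y^*)=\{x_1^*,\dots,x_r^*\}\subset X$ with $r\leq m$ be the support of $y^*$ with respect to $T$. By Proposition 2.1(b), applied with $\varphi=T$, we have $T(g)(y^*)=\sum_{j=1}^r\mu_j g(x_j^*)$ for all $g\in D(X)$. Put $x_j=(\triangle\Psi)(x_j^*)\in X_0$. I will show that $\supp(y)\subset\{x_1,\dots,x_r\}$, which gives at most $r\le m$ points.

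Let $z\in\overline X_0\setminus\{x_1,\dots,x_r\}$. Choose a neighborhood $U$ of $z$ in $\overline X_0$ whose closure misses the finite set $\{x_1,\dots,x_r\}$. Take any $f\in LE(X_0)$ with $f(X_0\setminus U)=0$. Then $f^*=f\circ(\triangle\Psi)$ vanishes at every $x_j^*$, because $x_j\notin U$. By Claim 8 and linearity,
$$\varphi(f)(y)=T(f^*)(y^*)=\sum_{j=1}^r\mu_j f^*(x_j^*)=0 .$$
Hence no such $f$ has $\varphi(f)(y)\neq 0$, so $z\notin\supp(y)$. One small point needs care here. The representation formula from Proposition 2.1(b) is only valid for $g\in D(X)$, so I must check that $f^*\in D(X)$. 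This holds because $f^*$ is a finite real linear combination of elements of $\Psi\subset D(X)$, and $D(X)$ is a linear space.

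I expect the main obstacle to be the passage to $\overline Y_0$, not the estimate on $Y_0$. That step uses the lower semi-continuity in Proposition 2.1(c), which requires the hypotheses (1)--(3) of Proposition 2.1 to hold for $X_0$, $Y_0$ and $\varphi$. Those hypotheses were verified just above: $C$ is a $QS$-algebra satisfying $(2.3)$, partitions of unity are subordinated to covers from $\mathcal B(\overline X)$, and the real-valued elements of $E(\overline Y_0)$ are dense in $C_p(\overline Y_0)$. The one remaining subtlety is that condition (1) there asks for partitions of unity subordinated to every finite open cover, while we only have them for covers by basic sets. This suffices, because in the proofs of (a)--(c) every finite open cover may be refined to a finite cover by elements of $\mathcal B(\overline X)$, by compactness of $\overline X_0$ and closedness of $\mathcal B(\overline X)$ under finite unions or intersections as needed.
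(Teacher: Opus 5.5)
Your proof is correct, and its core is the same as the paper's: the $\varphi$-support of $y$ lies inside the $\triangle\Psi$-image of the $T$-support of a preimage point. The two proofs differ in where they pass from the dense subspace to the compactification.

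The paper treats every $y\in\overline Y_0$ at once. It lifts $y$ to $y^*\in\beta Y$ and uses that $|\supp(y^*)|\le m$ on all of $\beta Y$, which was obtained at the start of the proof of Theorem 1.1 from lower semi-continuity of the $T$-support on $\beta Y$. It then shows $\supp(y)\subset(\triangle\overline\Psi)(\supp(y^*))$. This step uses Proposition 2.1(a) for $T$ on $\beta X,\beta Y$, together with the identity $\overline{\varphi(f)}(y)=\overline{T(f^*)}(y^*)$ at points of the remainder; the paper asserts that identity without comment.

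You instead work only at points of $Y_0$. There the explicit representation $T(g)(y^*)=\sum_j\mu_j g(x_j^*)$ turns the containment into a one-line computation, with no extensions involved. You then transfer the bound to $\overline Y_0$ by lower semi-continuity of the $\varphi$-support, i.e.\ Proposition 2.1(c) applied to $\overline X_0,\overline Y_0$.

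Your route keeps the key computation elementary and avoids justifying anything on $\beta Y\setminus Y$. Its cost is that it needs Proposition 2.1 for $\varphi$ before Claim 9 is settled. Those hypotheses are needed anyway for Proposition 2.3, and that proof opens with exactly your density and lower semi-continuity reduction. So your argument also shows that the bound on $Y_0$ alone is all the application requires.

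Your remark about condition (1) is also a correct patch of a point the paper passes over. Partitions of unity in $E(\overline X_0)$ are only available for covers by members of $\mathcal B(\overline X)$, and this suffices for (a)--(c): the covers used there can be chosen from the base, or grouped and summed, since $E(X_0)$ is closed under addition.
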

We fix $y\in\overline Y_0$ and let $y^*\in\beta Y$ with $\overline h_0(y^*)=y$. We claim that $\supp(y)\subset (\triangle\overline\Psi)(\supp(y^*))$. Indeed, let $x\in\overline X_0\backslash (\triangle\overline\Psi)(\supp(y^*))$ and $x^*\in\beta X$ with $(\triangle\overline\Psi)(x^*)=x$.  If $U\subset\overline X_0\backslash(\triangle\overline\Psi)(\supp(y^*))$ a 
neighborhood of $x$ in $\overline X_0$, then $(\triangle\overline\Psi)^{-1}(U)$ is a neighborhood of $x^*$ in $\beta X$ which is disjoint from $\supp(y^*)$. So, $\overline{T(f^*)}(y^*)=0$ for all $f^*\in D(X)$ with $f^*(X\backslash (\triangle\overline\Psi)^{-1}(U))=0$. Therefore, if $f\in LE(X_0)$ and $f(X_0\backslash U)=0$, then $\overline{\varphi(f)}(y)=\overline{T(f\circ\triangle\Psi)}(y^*)=0$. Hence, $x\not\in\supp(y)$, which means $\supp(y)\subset (\triangle\overline\Psi)(\supp(y^*))$.  $\diamondsuit$

Now, we can complete the proof of Theorem 1.1. Since the spaces $\overline X_0$, $\overline Y_0$ and the map $\varphi:LE(X_0)\to LE(Y_0)$ satisfy the hypotheses of Proposition 2.2, we have $\dim Y_0\leq m\cdot d$. This, according to the choice of the map $h_0$, implies $\dim Y\leq m\cdot d$.
$\Box$

\textit{ Proof of Corollary $1.2$.} Let $T:D_p(X)\to D_p(Y)$ be a continuous linear surjection such that both $X$ and $Y$ are normal spaces and $X$ is 
strongly countable-dimensional. Then there are countably many closed subsets $X_n\subset X$ with $\dim X_n\leq n$. For every two integers $k,n$ let 
$Y_{k,n}=\{y\in Y:\supp(y)\subset X_n{~}\hbox{and}{~}|\supp(y)|\leq k\}$. Since the support function is lower semi-continuous, see Proposition 2.1(c), the sets $Y_{k,n}$ are closed in $Y$, and since $X$ is normal, the projection maps $\pi_n:D_p(X)\to D_p(X_n)$ are linear continuous surjections. We define  a linear map $T_{k,n}:D_p(X_n)\to D_p(Y_{k,n})$, $T_{k,n}(h)=\{T(f)|Y_{k,n}:\pi_n(f)=h\}$. This definition is correct because $\supp(y)\subset X_n$ implies $T(f_1)(y)=T(f_2)(y)$ for any $y\in Y_{k,n}$ and $f_1,f_2\in D(X)$ with $\pi_n(f_1)=\pi_n(f_2)$. Moreover, $T_{k,n}$ is a continuous surjection since $Y$ is normal and $T$ is surjective. Finally, it follows from the definition of $\supp(y)$ that the support of every $y\in Y_{k,n}$ with respect to $T_{k,n}$ is $\supp(y)$. Therefore, by Theorem 1.1, $\dim Y_{k,n}\leq k\cdot\dim X_n\leq k\cdot n$. Since all supports $\supp(y)$, $y\in Y$, are finite subsets of $X$ (see Proposition 2.1), $Y=\bigcup\{Y_{k,n}:k,n\geq 1\}$. So, $Y$ is also strongly countable-dimensional.  $\Box$

\textit{ Proof of Theorem $1.3$.} Suppose $T:D_p(X)\to D_p(Y)$ is a continuous linear surjection with $\dim X=0$. Following the notations from the proof of Theorem 1.1, for every $h\in\mathcal F_Y$ we construct metric compacta $\overline X_0$, $\overline Y_0$, $QS$-algebras $E(X_0)$ and $E(Y_0)$, a map $h_0: Y\to Y_0$ and a linear surjection $\varphi:LE(X_0)\to LE(Y_0)$ satisfying the hypotheses of Proposition 2.3 such that $h_0\succ h$ and $\dim\overline X_0=0$. It's sufficient to proof that $\dim Y_0=0$. To this end, we consider the sets
$Y_0(m)=\{y\in Y_0:|\supp_\varphi(y)|\leq m\}$, $m\geq 1$, where $\supp_\varphi(y)$ is the support of $y$ with respect to $\varphi$. The arguments from the proof of Corollary 1.2 show that each $Y_0(m)$ is closed in $Y_0$ and there is a continuous linear surjection $\varphi_m:D_p(X_0)\to D_p(Y_0(m))$ defined by 
$\varphi_m(f)=\varphi(f)|Y_0(m)$. Since the support of every $y\in Y_0(m)$ with respect to $\varphi_m$ is the same as $\supp_\varphi(y)$,  
Proposition 2.3 implies $\dim Y_0(m)=0$. Since $Y_0=\bigcup_{m\geq 1}Y_0(m)$, the Countable Sum Theorem implies $\dim Y_0=0$. 
$\Box$

\textbf{Acknowledgments.} I would like to thank Prof. A. Leiderman for bringing Zakrzewski's paper to my attention. The author also thanks Prof. G. Dimov
for the the careful reading the first part of that paper and noticing some gaps.



\end{document}